\documentclass[11pt]{article}
\usepackage[utf8]{inputenc}
\usepackage{hyperref}
\usepackage{amsmath,amssymb,amsthm}
\usepackage{amsfonts,tikz}
\usepackage[margin=1in]{geometry}
\usepackage{mathtools}
\usepackage{mathrsfs}
\usepackage{enumitem}
\usepackage[backend=biber]{biblatex}
\usetikzlibrary{positioning,automata}

\addbibresource{name.bib}

\newcommand{\lm}{\lambda}

\newcommand{\tm}{\widetilde{m}}
\newcommand{\m}[1]{\ (\text{mod }#1)}
\newcommand{\G}{\mathcal{G}}

\newtheorem{lemma}{Lemma}[section]
\newtheorem{cor}{Corollary}[section]
\newtheorem{theorem}{Theorem}[section]

\newtheorem{proposition}{Proposition}[section]
\theoremstyle{definition}
\newtheorem{definition}{Definition}[section]

\title{A Graph Polynomial from Chromatic Symmetric Functions}
\author{William Chan, Logan Crew\footnote{Department of Combinatorics \& Optimization, University of Waterloo, Waterloo, ON, N2L 3E9.\newline  Emails:  w47chan@uwaterloo.ca, lcrew@uwaterloo.ca
}
}
\date{\today}

\begin{document}

\maketitle

\begin{abstract}

This paper describes how many known graph polynomials arise from the coefficients of chromatic symmetric function expansions in different bases, and studies a new polynomial arising by expanding over a basis given by chromatic symmetric functions of trees. 
    
\end{abstract}

Keywords: graph polynomials, chromatic polynomial, chromatic symmetric function, algebraic graph theory, algebraic combinatorics
\newline

\begin{section}{Introduction}
The chromatic symmetric function $X_G$ of a graph $G$ was introduced by Stanley in the 1990s \cite{STANLEY1995166} as a generalization of the chromatic polynomial, defined as
\[X_G := \sum_{\substack{\kappa : V(G) \to\mathbb{N}\setminus\{0\}\\\kappa\text{ proper colouring}}}x_{\kappa(v_1)}\dots x_{\kappa(v_{|V(G)|})}.\]

In particular, while $[x^k]\chi_G$ counts the number of proper $k$-colourings of $G$ for each $k$, if $\lm = (\lm_1, \dots, \lm_k)$ with $\lm_1 \geq \dots \geq \lm_k > 0$ is such that $\lm_1 + \dots + \lm_k = |V(G)|$ is an integer partition, then when expanding in the monomial symmetric function basis the coefficient $[m_{\lm}]X_G$ gives (up to a constant factor) the number of proper $k$-colourings of $G$ such that colour $i$ is used $\lm_i$ times. The study of $X_G$ is currently an active area of research due to its connections to other areas of mathematics, including algebraic geometry \cite{brosnan, sazdanovic} and knot theory \cite{chmutov, noble}.

If $l(\lm)$ is the number of distinct integers in the partition $\lm$, Stanley showed that when the chromatic symmetric function of a graph $G$ is expanded over the elementary symmetric function basis $\{e_{\lm}\}$, mapping each $e_{\lm} \rightarrow x^{l(\lm)}$ yields a polynomial $p(x)$ such that $[x^k]p(x)$ is the number of acyclic orientations of $G$ with $k$ sinks (\cite{STANLEY1995166} Theorem 3.3); this polynomial and its generalizations have recently been further studied by Hwang et. al. \cite{acyclic}. Similarly, expanding $X_G$ into the power-sum symmetric function basis $\{p_{\lm}\}$ and setting each $p_{\lm} \rightarrow x^{l(\lm)}$ yields $\chi_G(x)$, and doing the same with the augmented monomial symmetric function basis $\tm_{\lm}$ yields a polynomial $q(x)$ such that $[x^k]q(x) = \chi_G(k)$.

In recent work \cite{crew2020deletioncontraction}, Spirkl and the second author extended $X_G$ to graphs with vertices weighted by positive integers to give the function a deletion-contraction relation, and Aliniaeifard, Wang, and van Willigenburg \cite{aliniaeifard2021extended} showed that if $\{G_i\}_{i=1}^{\infty}$ is a family of connected, vertex-weighted graphs such that $G_i$ has weight $i$, then their chromatic symmetric functions are algebraically independent, and so may be used to form a basis $\{X_{G_{\lm}}\}$ by letting $G_{\lm}$ be the disjoint union of $G_{\lm_1}, \dots, G_{\lm_k}$; for instance, the aforementioned $p$ and $e$ bases arise in this way where $G_i$ is a single vertex of weight $i$ and an unweighted clique of size $i$ respectively.

In this work, we show that for every (possibly weighted) graph $G$, if $\{T_i\}_{i=1}^{\infty}$ is a family of unweighted trees such that $T_i$ has $i$ vertices, the polynomial obtained by mapping $X_{T_{\lm}} \rightarrow x^{l(\lm)}$ in $X_G$ is independent of the choice of the $T_i$. We call this polynomial the \emph{tree polynomial of $G$}, and we study its properties, noting especially a kind of duality with the chromatic polynomial for vertex-weighted graphs.

In Section 2 we provide the necessary background in symmetric functions and colouring. In Section 3 we introduce the tree polynomials described above, and give a combinatorial interpretation for their coefficients. In Section 4 we find further relationships between the chromatic polynomial and tree polynomial, as well as an interpretation for the evaluation of the tree polynomial on positive integers. Finally, in Section 5 we investigate further directions for research.
\end{section}
\begin{section}{Background}
We take the natural numbers to contain $0$. We define the interval $[a, b] = \{a, a+1, \dots, b\}$ where $a, b \in \mathbb{N}$. 

A \textit{partition} $\lambda$ of $n \in \mathbb{N}\setminus\{0\}$ is a weakly decreasing tuple $(\lambda_1, \dots, \lambda_k)$ of positive integers such that $\sum_{i=1}^{k}\lambda_i = n$. We write $\ell(\lambda) = k$ to be the \emph{length} of $\lm$ and $|\lambda| = n$ to be the \emph{size} of $\lambda$. We write $\lambda \vdash n$ to mean $\lambda$ is a partition of $n$. When we say a partition (without specifying its size) we mean a partition of some $n\in\mathbb{N}\setminus\{0\}$. 

We take all graphs $G$ to be simple. In particular, we take the edge-contraction graph $G/e$ with $e \in E(G)$ to not have any loops or multi-edges. A \emph{weighted graph} (as defined in \cite{crew2020deletioncontraction}) will be denoted as $(G, \omega)$ where $\omega : V(G) \to \mathbb{N}\setminus\{0\}$ is a weight function. We define the contraction of a weighted graph $(G, \omega)$ by an edge $e=v_1v_2 \in E(G)$ to be the graph $(G/e, \omega/e)$, where $(\omega/e)(v) = \omega(v_1) + \omega(v_2)$ if $v$ is the vertex obtained from contracting $v_1v_2$ and $(\omega/e)(v) = \omega(v)$ otherwise. We say the \emph{total weight} of $(G,\omega)$ is $\sum_{v\in V(G)}\omega(v)$, and the \emph{excess weight} of $(G,\omega)$ is $(\sum_{v \in V(G)}\omega(v)) - |V(G)|$, that is, the total weight minus the number of vertices. 

We will always assume there is an implicit total ordering on the edges of $G$ (implicit because the results that follow do not depend on the choice of ordering). Where $e$ and $f$ are edges of $G$, we say $e < f$ if $e$ is less than $f$ in this ordering. We let $C(G)= \{G_1, \dots, G_k\}$ denote the set of connected components of $G$, where each component $G_i$ is a subgraph of $G$. We write $H \le G$ where $H$ and $G$ are graphs to mean $H$ is a subgraph of $G$. A spanning subgraph $H \le G$ is called \textit{maximally connected} if $C(H) = \{H_1, \dots, H_k\}$ and $C(G) = \{G_1, \dots, G_k\}$ with $\{V(H_1), \dots, V(H_k)\} = \{V(G_1), \dots, V(G_k)\}$. Note that this is equivalent to $E(G)\setminus E(H)$ containing no bridges for $H$. 

If $F$ is a spanning forest of $G$ we say an edge $e \in E(G)\setminus E(F)$ is \emph{externally active} if $F \cup e$ is not a forest and $e$ is the minimal edge in the unique cycle in $F \cup e$. We say $F$ is \emph{internal} if no edges $e \in E(G)\setminus E(F)$ are externally active. For $S \subseteq E(G)$, we let $\G(S) = (V(G), S)$. That is, $\G(S)$ is the spanning subgraph induced by $S$.
\begin{definition}
Let $G$ be a graph. An edge set $S \subseteq E(G)$ is a \emph{broken circuit} if it contains all the edges of some cycle of $G$ other than its smallest one.
\end{definition}
\begin{definition}
The \emph{broken circuit complex} $B_G$ is the set of all $S \subseteq E(G)$ which do not contain (all the edges of) a broken circuit.
\end{definition}
We note that the elements of $B_G$ are subsets of $E(G)$ and thus $B_G$ has an induced partial order arising from the partial order on $E(G)$ with respect to inclusion. From the definition of $B_G$, we note that for each element $F \in B_G$ that $\G(F)$ is acyclic and hence a forest, as the existence of a cycle means the existence of the broken circuit contained in that cycle. Moreover, we note that for each $S \in B_G$, it follows that $\G(S)$ is internal, as an externally active edge would imply the existence of a broken circuit. Conversely, it is clear that the edge set of every internal forest is an element of $B_G$, so $B_G$ consists precisely of the edge sets of internal forests of $G$.

For $S \in B_G$ and $C(\G(S)) = \{H_1, \dots, H_k\}$, we write $\lambda(S)$ to be the integer partition with parts $|V(H_1)|, \dots, |V(H_k)|$. 
\begin{lemma}
$S \in B_G$ is (inclusion-wise) maximal if and only if $\G(S)$ is maximally connected.
\end{lemma}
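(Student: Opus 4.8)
The plan is to prove both directions of the equivalence directly from the definitions, using the characterization of $B_G$ as the set of edge sets of internal forests.

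First I would prove the reverse direction: suppose $\G(S)$ is maximally connected; I want to show $S$ is maximal in $B_G$. Suppose for contradiction that there is $S' \in B_G$ with $S \subsetneq S'$, and pick an edge $e \in S' \setminus S$. Since $\G(S')$ is a forest (being an element of $B_G$), adding $e$ to $\G(S)$ cannot create a cycle, so $e$ must join two distinct components of $\G(S)$ — that is, $e$ is a bridge for $\G(S)$. But $e \in E(G) \setminus E(\G(S))$, and maximal connectedness says precisely that $E(G) \setminus S$ contains no bridges for $\G(S)$ (this is the remark immediately following the definition of ``maximally connected''). This is a contradiction, so $S$ is maximal.

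For the forward direction, suppose $S \in B_G$ is maximal; I want to show $\G(S)$ is maximally connected, equivalently that $E(G) \setminus S$ contains no bridge for $\G(S)$. Suppose $e \in E(G) \setminus S$ is such a bridge, i.e. $e$ joins two distinct components of $\G(S)$. Then $\G(S \cup \{e\})$ is still acyclic, hence a forest. I then need to check that $S \cup \{e\} \in B_G$, i.e. that it contains no broken circuit; but since $\G(S \cup \{e\})$ is a forest it contains no cycle at all, so a fortiori it contains no broken circuit, hence $S \cup \{e\} \in B_G$ (using the established fact that $B_G$ is exactly the edge sets of forests that are internal — and a forest is automatically internal when... actually more carefully: $S\cup\{e\}$ being acyclic, any externally active edge would lie in a broken circuit, but we just need $S\cup\{e\}$ itself to avoid broken circuits, which holds since it is acyclic). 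Thus $S \subsetneq S \cup \{e\} \in B_G$, contradicting maximality of $S$.

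The main obstacle, such as it is, is making sure the bookkeeping between the three equivalent notions is airtight: ``$\G(S)$ maximally connected'' $\iff$ ``$E(G)\setminus S$ has no bridge for $\G(S)$'' (given in the text), and then relating ``no bridge'' to the existence of a strictly larger forest in $B_G$. I expect the subtlety to be purely definitional rather than technical — in particular, confirming that adding a single non-bridge... wait, adding a bridge keeps acyclicity, and that acyclicity alone suffices for membership in $B_G$, so no careful analysis of broken circuits or external activity is actually needed beyond the observation that a forest contains no broken circuit. I would state this cleanly up front to keep the proof short.
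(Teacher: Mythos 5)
Your reverse direction (maximally connected implies maximal in $B_G$) is correct and matches the paper's argument. The forward direction, however, contains a genuine gap: you justify $S \cup \{e\} \in B_G$ by asserting that an acyclic edge set contains no broken circuit. That is false. A broken circuit is a cycle with its \emph{smallest} edge deleted, so a broken circuit is itself acyclic; consequently an acyclic edge set can perfectly well contain a broken circuit and fail to lie in $B_G$. Concretely, take $G = K_3$ with edges $e_1 < e_2 < e_3$ and $S = \{e_3\}$. Then $S \in B_G$, the edge $e_2$ is a bridge for $\G(S)$, and $S \cup \{e_2\} = \{e_2, e_3\}$ is a forest but is exactly the broken circuit of the triangle, so $S \cup \{e_2\} \notin B_G$. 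Your argument, applied to this bridge, would wrongly conclude membership in $B_G$. (If acyclicity really did suffice, $B_G$ would simply be the set of all forests of $G$ and the broken-circuit complex would carry no information.)

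The repair is precisely the extra work the paper does in this direction: among the bridges in $E(G)\setminus S$ you must add the \emph{minimal} one, $f_1$, and then show $S \cup \{f_1\}$ contains no broken circuit. The paper's argument: if $S \cup \{f_1\}$ contained the broken circuit $W$ of a cycle $C$ with smallest edge $\tilde f$, then since $S$ itself is broken-circuit-free we would have $f_1 \in W$ and $\tilde f \ne f_1$; moreover $S \cup \{\tilde f\}$ is acyclic (the unique cycle of $S \cup \{f_1, \tilde f\}$ uses $f_1$), so $\tilde f$ is a bridge for $\G(S)$ smaller than $f_1$, contradicting minimality. So the "careful analysis of broken circuits" you hoped to avoid is in fact the essential content of this half of the lemma.
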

\begin{proof}
If $\G(S)$ is maximally connected then $E(G)\setminus S$ contains no bridges for $\G(S)$. So if $E(G) \supseteq \pi \supset S$ then $\G(\pi)$ contains a cycle. But for $\sigma \in B_G$, we must have $\G(\sigma)$ a forest. So $\pi \not\in B_G$ which shows $S \in B_G$ is inclusion wise maximal. 

Conversely, suppose $\G(S)$ is not maximally connected with $S \in B_G$. Then there exists a set of bridges $\{f_1, \dots, f_k\} \in E(G)\setminus S$ for $\G(S)$ with each $f_1 < \dots < f_k$ in the implicit total ordering on $E(G)$. Consider $S' := S \cup \{f_1\}$. Suppose $S'$ contains broken circuit $W$, which is the cycle $C$ with the smallest edge removed. Note that because $S \in B_G$, $W$ must contain $f_1$. Suppose the smallest edge in $C$ is $\tilde{f}$. As $\tilde{f} \not\in W$ and $f_1 \in W$, it follows that $\tilde{f} \ne f_1$. We note that $S \cup \{\tilde{f}\} = (S' \cup \{\tilde{f}\}) \setminus \{f_1\}$ contains no cycle as $S' \cup \{\tilde{f}\}$ contains exactly one cycle and $f_1$ is in that cycle. So $\tilde{f}$ is a bridge for $\G(S)$ which contradicts the minimality of $f_1$. 
\end{proof}
With this lemma as motivation, we say $F$ is an \emph{internal preserving forest} if $E(F)$ is in $B_G$ and is maximal in $B_G$.
\begin{definition}
Let $G = (V, E)$ be a graph. We say $S \subseteq E$ is a \emph{$k$-cutset} if $|C((V, E\setminus S))| = k$, that is, if removing $S$ yields $k$ connected components.
\end{definition}
Let $W$ be a vector space and let $A \subseteq W$ be linearly independent. For $f \in \text{span}(A)$ and $a \in S$, we write $[a]f$ to mean the coefficient attached to $a$ when uniquely writing $f$ as a linear combination of elements in $S$. 

For $u, v \in V(G)$ we write $u \sim v$ to mean $u$ is adjacent to $v$, i.e. $uv \in E(G)$. A \emph{proper $k-$colouring} of $G$ is a function $f : V(G) \to \{1, \dots, k\}$ such that $u \sim v \implies f(u) \ne f(v)$. A \emph{proper colouring} is a function $g : V(G) \to \mathbb{N}\setminus\{0\}$ is a proper $k-$colouring for some $k\in\mathbb{N}\setminus\{0\}$.
Let $\chi_G(x)$ be the \emph{chromatic polynomial} of $G$, defined such that for each $k \in \mathbb{N} \setminus \{0\}$, $\chi_G(k)$ is the number of proper $k-$colourings of $G$.
\begin{proposition}
$\sum_{i}[x^i]\chi_G = 0$ if $E(G) \ne \emptyset$ and $1$ if $E(G) = \emptyset$
\end{proposition}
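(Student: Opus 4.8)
The plan is to identify the quantity $\sum_i [x^i]\chi_G$ with the evaluation $\chi_G(1)$. For any polynomial $p(x) = \sum_i a_i x^i$ one has $p(1) = \sum_i a_i$, so the sum of the coefficients of $\chi_G$ is exactly $\chi_G(1)$; this uses only that $\chi_G$ is a genuine polynomial, which is standard. It then remains to evaluate $\chi_G(1)$, and by the defining property of the chromatic polynomial this is the number of proper $1$-colourings of $G$, i.e. the number of proper colourings $f\colon V(G)\to\{1\}$.

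Next I would split into cases. If $V(G)=\emptyset$, then $\chi_G$ is the constant polynomial $1$ and $E(G)=\emptyset$, so the claim holds trivially; assume henceforth $V(G)\neq\emptyset$. There is then exactly one function $f\colon V(G)\to\{1\}$, namely the constant map $v\mapsto 1$. If $E(G)=\emptyset$, this map is vacuously a proper colouring, so there is exactly one proper $1$-colouring and $\chi_G(1)=1$. If instead $E(G)\neq\emptyset$, fix an edge $uv\in E(G)$; then $u\sim v$ while $f(u)=1=f(v)$, so $f$ is not proper and $G$ has no proper $1$-colouring, giving $\chi_G(1)=0$. Combining the cases yields the proposition.

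I do not anticipate any real obstacle here: the statement amounts to the remark that the sum of the coefficients of the chromatic polynomial is its value at $1$, which counts colourings using a single colour. If one instead prefers an argument internal to the polynomial formalism, the same conclusion follows by induction on $|E(G)|$ via the deletion--contraction recurrence $\chi_G = \chi_{G\setminus e} - \chi_{G/e}$, using that deleting or contracting an edge of a graph with at least two edges still leaves at least one edge (so that the inductive hypothesis applies to both terms); but the direct evaluation above is the cleanest route, so that is what I would write.
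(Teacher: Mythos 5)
Your proposal is correct, and it takes a genuinely different (and shorter) route than the paper. You identify $\sum_i [x^i]\chi_G$ with the evaluation $\chi_G(1)$ and then count proper $1$-colourings directly: there is exactly one candidate map $V(G)\to\{1\}$, which is proper precisely when $G$ has no edge, so $\chi_G(1)$ is $1$ or $0$ according as $E(G)=\emptyset$ or not. This is fully rigorous, since $\chi_G$ is a polynomial agreeing with the colouring count at every positive integer, in particular at $1$. The paper instead argues by deletion--contraction: the base case $E(G)=\emptyset$ gives $\chi_G=x^{|V(G)|}$ with coefficient sum $1$, and for $|E(G)|\ge 1$ the identity $\chi_G=\chi_{G\setminus e}-\chi_{G/e}$ together with the observation that $E(G\setminus e)=\emptyset$ iff $E(G)=\{e\}$ iff $E(G/e)=\emptyset$ (valid for simple graphs) shows the two coefficient sums on the right agree and cancel. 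Your direct evaluation is the more elementary argument and avoids the (implicit) induction the paper relies on; the paper's version has the mild virtue of staying entirely inside the polynomial recurrence, which is the formalism reused throughout the rest of the paper. Your closing remark correctly reconstructs the paper's alternative route, so nothing is missing.
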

\begin{proof}
If $E(G) = \emptyset$ then $\chi_G = x^{|V(G)|}$ and we're done. Suppose $|E(G)| \geq 1$, and pick $e \in E(G)$. By the well-known deletion-contraction relation of the chromatic polynomial,
\[\chi_G = \chi_{G\setminus e} - \chi_{G/e}.\]
Note that $E(G\setminus e) = \emptyset$ if and only if $E(G) = \{e\}$ if and only if $E(G/e)=\emptyset$. So $\sum_{i}[\chi_{G\setminus e}]x^i = \sum_{i}[\chi_{G/e}]x^i$ and the result follows. 
\end{proof}

Let $\Lambda$ be the algebra of symmetric functions over $\mathbb{Q}$ in variables $x_1, x_2, \dots$. Let $\{b_\lambda : \lambda\vdash n\ge 0\}$ be a basis for $\Lambda$. We say $\{b_\lambda : \lambda\vdash n\ge 0\}$ is a \emph{multiplicative basis} if each $b_\lambda = b_{\lambda_1}\dotsb_{\lambda_{\ell(\lambda)}}$. We define the \textit{power-sum} basis $\{p_\lambda : \lambda\vdash n\ge0\}$ to be the multiplicative basis with each $p_n = \sum_{i=1}^{\infty}{x_i}^n$. For more information regarding symmetric functions, we defer to \cite{ecv2}. 

For a family of weighted graphs $\{(G_n,\omega_n)\}_{n=1}^{\infty}$ and a partition $\lambda$, we define the weighted graph $(G_\lambda, \omega_\lambda)$ to be the disjoint union of weighted graphs
\[(G_\lambda,\omega_\lambda) := (G_{\lambda_1}, \omega_{\lambda_1}) \cup \dots \cup (G_{\lambda_{\ell(\lambda)}}, \omega_{\ell(\lambda)}).\]
Similarly, if $\{G_n\}_{n=1}^{\infty}$ is a family of graphs, we define $G_\lambda$ to be the disjoint union of graphs
\[G_\lambda= G_{\lambda_1} \cup \dots \cup G_{\lambda_\ell(\lambda)}.\]

 In \cite{STANLEY1995166}, the chromatic symmetric function of $G$ with $V(G) = \{v_1, \dots, v_{|V(G)|}\}$ is defined as
\[X_G := \sum_{\substack{\kappa : V(G) \to\mathbb{N}\setminus\{0\}\\\kappa\text{ proper colouring}}}x_{\kappa(v_1)}\dots x_{\kappa(v_{|V(G)|})}\]
as a symmetric function analogue to the chromatic polynomial. With this definition, $X_G$ is homogeneous of degree $|V(G)|$ so the function does not satisfy a simple deletion-contraction relation, as contraction reduces the number of vertices. Spirkl and the second author thus generalized the chromatic symmetric function to the weighted chromatic symmetric function defined by \cite{crew2020deletioncontraction}
\[X_{(G, \omega)} \sum_{\substack{\kappa : V(G) \to\mathbb{N}\setminus\{0\}\\\kappa\text{ proper colouring}}}x_{\kappa(v_1)}^{\omega(v_1)}\dots x_{\kappa(v_{|V(G)|})}^{\omega(v_{|V(G)|})}\]
which does satisfy the deletion contraction relation (\cite{crew2020deletioncontraction} Lemma 2) for $e \in E(G)$,
\[X_{(G,\omega)} = X_{(G\setminus e, \omega)} - X_{(G/e, \omega/e)}.\]
Aliniaeifard, Wang, and van Willigenburg showed that one can form bases of $\Lambda$ which come from weighted chromatic symmetric functions, generalizing the work of \cite{cho} for unweighted graphs:
\begin{theorem}[\cite{aliniaeifard2021extended} Theorem 5.4]
Let $\{(G_n, \omega_n)\}_{n=1}^{\infty}$ be a family of connected weighted graphs such that the total weight of each $G_n$ is $n$. Then $\{X_{(G_\lambda, \omega_\lambda)} : \lambda\vdash n\ge0\}$ is a multiplicative basis for $\Lambda$.
\end{theorem}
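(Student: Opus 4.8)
The plan is to prove the two halves of the statement separately: that the family is multiplicative, essentially by construction, and that it is a vector-space basis of $\Lambda$, by comparing it with the power-sum basis. Multiplicativity is immediate, since a proper colouring of a disjoint union $G\sqcup H$ is exactly a pair of independent proper colourings of $G$ and $H$ and the weight monomial factors accordingly, whence $X_{(G\sqcup H,\omega)}=X_{(G,\omega)}X_{(H,\omega)}$; writing $b_n:=X_{(G_n,\omega_n)}$ and $b_\lambda:=X_{(G_\lambda,\omega_\lambda)}$ this gives $b_\lambda=b_{\lambda_1}\cdots b_{\lambda_{\ell(\lambda)}}$, so it remains only to check that $\{b_\lambda\}$ is a basis.

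Each $X_{(G_n,\omega_n)}$ is homogeneous of degree $n$ (its total weight), so each $b_\lambda$ with $\lambda\vdash n$ lies in the graded piece $\Lambda^n$; since $\dim_{\mathbb Q}\Lambda^n$ equals the number of partitions of $n$, which is also the number of such $b_\lambda$, it suffices to prove $\{b_\lambda:\lambda\vdash n\}$ is linearly independent for every $n$. I would do this by showing that the transition matrix from $\{b_\lambda:\lambda\vdash n\}$ to the power-sum basis $\{p_\lambda:\lambda\vdash n\}$ is triangular for the dominance order on partitions, with nonzero diagonal entries.

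The main tool is the weighted analogue of Stanley's power-sum expansion \cite{STANLEY1995166}, namely $X_{(G,\omega)}=\sum_{S\subseteq E(G)}(-1)^{|S|}p_{\lambda(S,\omega)}$, where $\lambda(S,\omega)$ is the partition recording the total weights of the connected components of $\G(S)$ (immediate from inclusion--exclusion over monochromatic edge sets). Applied to a connected $(G_n,\omega_n)$, a term contributes to $p_{(n)}$ exactly when $\G(S)$ is connected, so $[p_{(n)}]X_{(G_n,\omega_n)}=\sum_{S:\,\G(S)\text{ connected}}(-1)^{|S|}$, which is precisely the coefficient of $q$ in the chromatic polynomial of the underlying graph $G_n$; since $G_n$ is connected this is nonzero (equivalently $\chi_{G_n}(q)$ is divisible by $q$ but not $q^2$, by a one-edge deletion--contraction induction giving $(-1)^{|V(G_n)|-1}[q]\chi_{G_n}\ge 1$). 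Any other $\lambda(S,\omega)$ that appears has at least two parts and is therefore strictly dominated by $(n)$.

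Multiplying out $b_\lambda=\prod_i X_{(G_{\lambda_i},\omega_{\lambda_i})}$, each term of the product is $\pm p_\nu$ where $\nu$ is the concatenation of the partitions $\lambda(S_i,\omega_{\lambda_i})$; each of these is dominated by $(\lambda_i)$, and since dominance order is compatible with concatenation of partitions we get $\nu\preceq\lambda$, with $\nu=\lambda$ forcing $\nu$ to have $\ell(\lambda)$ parts and hence every $\G(S_i)$ connected. Therefore
\[X_{(G_\lambda,\omega_\lambda)}=\Big(\prod_{i=1}^{\ell(\lambda)}[q]\chi_{G_{\lambda_i}}\Big)p_\lambda+\sum_{\nu\prec\lambda}c_{\lambda\nu}\,p_\nu\]
with nonzero leading coefficient, and ordering the partitions of $n$ by any linear extension of dominance makes this transition matrix triangular with nonzero diagonal, hence invertible; summing over $n$ then shows $\{b_\lambda\}$ is a basis of $\Lambda$. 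The one step that is not pure bookkeeping, and where connectedness is genuinely used, is the nonvanishing of $[p_{(n)}]X_{(G_n,\omega_n)}$ — that a connected graph's chromatic polynomial has nonzero linear coefficient — so I expect that (together with confirming compatibility of dominance with concatenation) to be the main thing to pin down.
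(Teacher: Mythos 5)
Your proof is correct, and since the paper only cites this result (it is Theorem 5.4 of \cite{aliniaeifard2021extended}, stated without proof here), the relevant comparison is with that reference, whose argument is essentially the same as yours: expand in the power-sum basis via the (weighted) Whitney-type expansion, observe that every $p_\nu$ occurring in $X_{(G_\lambda,\omega_\lambda)}$ has $\nu$ a refinement of $\lambda$ (hence $\nu\preceq\lambda$), and use connectivity to make the diagonal coefficient $\prod_i[q]\chi_{G_{\lambda_i}}$ nonzero. You have correctly isolated the one genuinely substantive ingredient --- that a connected graph's chromatic polynomial has nonvanishing linear coefficient --- and the refinement-implies-dominance step is standard, so no gaps remain.
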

Such bases for $\Lambda$ are called \emph{chromatic bases}. For instance, among the c five classical symmetric function bases \cite{ecv2}, the $m-$, $e-$, and $p-$ basis are chromatic, while the $h-$ and $s-$ basis are not \cite{chobasis, crew2020deletioncontraction}.
\begin{cor}
Let $\{T_n\}_{n=1}^{\infty}$ be a family of trees with each $T_n$ having $n$ vertices. Then 
\[\{X_{T_\lambda} : \lambda\vdash n\ge0\}\] 
is a multiplicative basis for $\Lambda$.
\end{cor}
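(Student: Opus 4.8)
The plan is to realize each unweighted tree $T_n$ as a weighted graph and then invoke Theorem 2.3 directly. First I would record the elementary fact that for any graph $G$, assigning every vertex the weight $1$ produces a weighted graph $(G,\omega)$ with $X_{(G,\omega)} = X_G$: in the defining sum one has $x_{\kappa(v_i)}^{\omega(v_i)} = x_{\kappa(v_i)}^1 = x_{\kappa(v_i)}$, so the weighted and unweighted sums agree term by term over the same set of proper colourings. Consequently, for the family $\{(T_n,\omega_n)\}_{n=1}^{\infty}$ in which each $\omega_n$ is identically $1$, the disjoint union $(T_\lambda,\omega_\lambda)$ is exactly $T_\lambda$ with all weights $1$, and hence $X_{(T_\lambda,\omega_\lambda)} = X_{T_\lambda}$ for every partition $\lambda$.

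Next I would check that this family satisfies the hypotheses of Theorem 2.3. Each $T_n$ is connected because it is a tree, and its total weight is $\sum_{v\in V(T_n)}\omega_n(v) = |V(T_n)| = n$; both conditions hold for every $n\ge 1$, and a tree on $n$ vertices exists for each such $n$. Thus $\{(T_n,\omega_n)\}_{n=1}^{\infty}$ is a family of connected weighted graphs such that the total weight of each $(T_n,\omega_n)$ is $n$, precisely as required.

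Applying Theorem 2.3 then yields that $\{X_{(T_\lambda,\omega_\lambda)} : \lambda\vdash n\ge 0\}$ is a multiplicative basis for $\Lambda$, and by the identification of the previous paragraph this set equals $\{X_{T_\lambda} : \lambda\vdash n\ge 0\}$, which completes the proof. I do not anticipate any genuine obstacle here: all of the substance is contained in Theorem 2.3, and the only points that need (immediate) verification are that the weight-$1$ specialization of $X_{(G,\omega)}$ recovers $X_G$ and that a tree on $n$ vertices is a connected graph of total weight $n$. One could alternatively deduce the statement in the unweighted setting from the analogous result of \cite{cho} cited just before Theorem 2.3.
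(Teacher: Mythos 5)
Your proposal is correct and matches the paper's (implicit) reasoning exactly: the corollary is stated as an immediate consequence of Theorem 2.3, obtained by viewing each tree $T_n$ as the weighted graph with all vertex weights equal to $1$, so that it is connected with total weight $n$ and $X_{(T_\lambda,\omega_\lambda)} = X_{T_\lambda}$. Nothing further is needed.
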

We will call such bases \emph{tree bases}. In particular, $\{X_{P_\lambda} : \lambda\vdash n\ge0\}$, $\{X_{S_\lambda} : \lambda\vdash n\ge0\}$, and $\{X_{C_\lambda} : \lambda\vdash n\ge0\}$ where $P_n, S_n, C_n$ are the paths, stars, and cycles on $n$ vertices respectively are multiplicative bases for $\Lambda$. In the same paper, it was found that there is a peculiar relation between certain chromatic bases and the power-sum basis.
\begin{theorem}\label{thm:chromrec}[Chromatic Reciprocity, \cite{aliniaeifard2021extended} Theorem 6.3]
The maps $p_\lambda\to X_{P_\lambda}$ and $p_\lambda \to X_{S_\lambda}$ are automorphisms of $\Lambda$ that are also involutions.
\end{theorem}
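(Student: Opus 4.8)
The plan is to treat both maps with a single template. Write $\varphi_P\colon\Lambda\to\Lambda$ for the linear map with $p_\lambda\mapsto X_{P_\lambda}$, and $\varphi_S$ for the one with $p_\lambda\mapsto X_{S_\lambda}$. That $\varphi_P$ is an algebra automorphism is almost free: since $p_1,p_2,\dots$ freely generate $\Lambda$, the assignment $p_n\mapsto X_{P_n}$ extends to a unique algebra endomorphism, which on $p_\lambda=\prod_i p_{\lambda_i}$ returns $\prod_i X_{P_{\lambda_i}}=X_{P_\lambda}$ by multiplicativity of $X$ over disjoint unions, so this endomorphism \emph{is} $\varphi_P$; and as $\varphi_P$ takes the basis $\{p_\lambda\}$ bijectively onto the basis $\{X_{P_\lambda}\}$ of the Corollary it is a linear isomorphism, hence an algebra automorphism (identically for $\varphi_S$). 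Thus the real content is involutivity, and since $\varphi_P^2$ is again an algebra endomorphism it equals $\mathrm{id}$ as soon as it fixes every generator; i.e.\ it suffices to prove $\varphi_P(X_{P_n})=p_n$ for all $n$ (and likewise $\varphi_S(X_{S_n})=p_n$).

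For this I would invoke Stanley's subgraph expansion $X_G=\sum_{S\subseteq E(G)}(-1)^{|S|}p_{\lambda(S)}$ from \cite{STANLEY1995166}, where $\lambda(S)$ lists the component sizes of $(V(G),S)$. For $G=P_n$, reading the vertices along the path, a subset $S$ of its $n-1$ edges is exactly a composition $(a_1,\dots,a_m)\models n$ (cut at the missing edges), with $|S|=n-m$ and $\lambda(S)$ the sorting of $(a_1,\dots,a_m)$; since $(-1)^{n-m}=\prod_i(-1)^{a_i-1}$ this gives
\[
X_{P_n}=\sum_{(a_1,\dots,a_m)\models n}\prod_{i=1}^{m}(-1)^{a_i-1}p_{a_i}.
\]
Passing to a bookkeeping variable $t$ and setting $A(t)=\sum_{k\ge1}(-1)^{k-1}p_kt^k$ and $P(t)=\sum_{k\ge1}p_kt^k$ in $\Lambda[[t]]$, summing the display over $n$ and grouping by number of parts collapses the composition sum to the geometric series $\sum_{n\ge1}X_{P_n}t^n=A(t)/(1-A(t))$. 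Since $1-A(t)$ has constant term $1$ it is a unit, and extending $\varphi_P$ coefficientwise to $\Lambda[[t]]$ (a continuous ring map fixing units appropriately), the identity just obtained reads $\varphi_P(P(t))=A(t)/(1-A(t))$.

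Now apply $\varphi_P$ once more: $\varphi_P\bigl(A(t)/(1-A(t))\bigr)=\varphi_P(A(t))/\bigl(1-\varphi_P(A(t))\bigr)$, so everything comes down to $\varphi_P(A(t))=\sum_{k\ge1}(-1)^{k-1}X_{P_k}t^k=-\sum_{k\ge1}X_{P_k}(-t)^k=-A(-t)/(1-A(-t))$. The single point requiring thought is the sign identity $A(-t)=\sum_{k\ge1}(-1)^{k-1}(-1)^kp_kt^k=-P(t)$, which converts this into $\varphi_P(A(t))=P(t)/(1+P(t))$, whence $\varphi_P^2(P(t))=\dfrac{P(t)/(1+P(t))}{\,1-P(t)/(1+P(t))\,}=P(t)$; reading off coefficients of $t^n$ gives $\varphi_P^2(p_n)=p_n$, so $\varphi_P$ is an involution. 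The star case runs along the same lines but is purely binomial: Stanley's formula gives $X_{S_n}=\sum_{j=0}^{n-1}(-1)^j\binom{n-1}{j}p_{j+1}p_1^{\,n-1-j}$ (choosing $j$ of the $n-1$ star-edges leaves one component of size $j+1$ and $n-1-j$ isolated vertices), and expanding each $X_{S_{j+1}}$ the same way shows the coefficient of $p_{i+1}p_1^{\,n-1-i}$ in $\varphi_S(X_{S_n})$ to be $\sum_{j=i}^{n-1}(-1)^{i+j}\binom{n-1}{j}\binom{j}{i}$, which by the absorption identity $\binom{n-1}{j}\binom{j}{i}=\binom{n-1}{i}\binom{n-1-i}{j-i}$ collapses to $\binom{n-1}{i}\sum_{k=0}^{n-1-i}(-1)^k\binom{n-1-i}{k}=\delta_{i,n-1}$; hence $\varphi_S(X_{S_n})=p_n$.

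I expect the obstacle to be bookkeeping rather than anything deep: one must (i) reduce involutivity to the identity $\varphi(X_{\cdot_n})=p_n$ on generators, (ii) translate Stanley's subgraph expansion of $P_n$ and $S_n$ into the composition and binomial sums above with the signs intact, and (iii) spot the generating-function repackaging for paths together with the identity $A(-t)=-P(t)$ (respectively the binomial collapse for stars). Beyond Stanley's expansion, multiplicativity of $X$ over disjoint unions, and the Corollary's statement that $\{X_{P_\lambda}\}$ and $\{X_{S_\lambda}\}$ are bases, no further input is needed.
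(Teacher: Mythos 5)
The paper does not prove this statement at all: it is imported verbatim as Theorem 6.3 of \cite{aliniaeifard2021extended}, so there is no in-paper argument to compare yours against. Your proposal is, as far as I can check, a correct and self-contained proof. The reduction is sound: $\varphi_P$ is the unique algebra endomorphism with $p_n\mapsto X_{P_n}$ (multiplicativity of $X$ over disjoint unions gives $p_\lambda\mapsto X_{P_\lambda}$), it is bijective because $\{X_{P_\lambda}\}$ is a basis by Corollary 2.1, and $\varphi_P^2=\mathrm{id}$ follows once $\varphi_P(X_{P_n})=p_n$ on the generators. The path computation checks out: the subgraph expansion gives $\sum_n X_{P_n}t^n=A(t)/(1-A(t))$ with $A(t)=\sum_k(-1)^{k-1}p_kt^k$, the identity $A(-t)=-P(t)$ yields $\varphi_P(A(t))=P(t)/(1+P(t))$, and the composition of Möbius-type transformations collapses to $\varphi_P^2(P(t))=P(t)$. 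The star computation is a clean binomial inversion; note for completeness that the monomials $p_{i+1}p_1^{n-1-i}$ are the distinct basis elements $p_{(i+1,1^{n-1-i})}$ for distinct $i$, so extracting their coefficients separately is legitimate. Two small points deserve a sentence each in a polished write-up: (i) the step $\varphi_P\bigl(A/(1-A)\bigr)=\varphi_P(A)/(1-\varphi_P(A))$ uses that the coefficientwise extension of $\varphi_P$ to $\Lambda[[t]]$ is a continuous ring map sending the unit $1-A$ to the unit $1-\varphi_P(A)$; (ii) you should say explicitly that $\varphi_P^2$ agrees with the identity on the free generators $p_n$ and hence on all of $\Lambda$. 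Your argument essentially reconstructs the generating-function proof of the cited source, so nothing here conflicts with how the paper uses the result downstream (Theorems 3.1, 3.2, 3.4 and Proposition 4.2).
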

\end{section}
\begin{section}{Tree Coefficients}
We begin with some definitions. By Corollary 2.1 we can give the following definition,
\begin{definition}
For a symmetric function $f = \sum_{\lambda}a_\lambda X_{P_\lambda}$, define $\tau_f(x)$ to be the polynomial with
\[[x^k]\tau_f = \sum_{\substack{\lambda\\\ell(\lambda)=k}}a_\lambda.\]
\end{definition}
\begin{definition}
For a symmetric function $f = \sum_{\lambda}a_\lambda p_\lambda$, define $\chi_f(x)$ to be the polynomial with
\[[x^k]\chi_f = \sum_{\substack{\lambda\\\ell(\lambda)=k}}a_\lambda.\]
\end{definition}
Note that $\tau : \Lambda \to \mathbb{Q}[x]$ and $\chi : \Lambda\to\mathbb{Q}[x]$ are the linear transformations which extend $X_{P_\lambda} \to t^{\ell(\lambda)}$ and $p_\lambda \to t^{\ell(\lambda)}$. In general,
\begin{definition}
Let $B = \{b_\lambda : \lambda\}$ be a basis for $\Lambda$. The \emph{$B-$polynomial} of $f = \sum_{\lambda}a_\lambda b_\lambda \in \Lambda$ is the polynomial $p(x)$ with
\[[x^k]p(x) = \sum_{\substack{\lambda\\\ell(\lambda)=k}}a_\lambda\]
\end{definition}
\begin{lemma}
Let $f_1, f_2$ be symmetric functions. Let $B$ be a multiplicative basis for $\Lambda$ and let $\theta_{f_i}$ denote the $B-$polynomial of $f_i$. Then $\theta_{f_1f_2} = \theta_{f_1}\theta_{f_2}$. That is $\theta : \Lambda \to \mathbb{Q}[t]$ is also an algebra homomorphism.
\end{lemma}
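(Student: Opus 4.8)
The plan is to reduce everything to the behaviour of $\theta$ on individual basis elements together with the fact that $B$ is multiplicative. First I would unwind Definition 3.3 to observe that the $B$-polynomial is exactly the linear map $\theta \colon \Lambda \to \mathbb{Q}[t]$ determined by $\theta(b_\lambda) = t^{\ell(\lambda)}$ for every partition $\lambda$ (with the convention $\theta(b_\emptyset) = 1$): writing $f = \sum_\lambda a_\lambda b_\lambda$, we have $\sum_\lambda a_\lambda t^{\ell(\lambda)} = \sum_k \big(\sum_{\ell(\lambda)=k} a_\lambda\big) t^k$, which is precisely $\theta_f$ by the definition of the $B$-polynomial. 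Since $B$ is a basis, this expansion is unique, so $\theta$ is well defined and linear.

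Next comes the key computation on basis elements. For partitions $\lambda$ and $\mu$, multiplicativity of $B$ gives $b_\lambda = b_{\lambda_1}\cdots b_{\lambda_{\ell(\lambda)}}$ and $b_\mu = b_{\mu_1}\cdots b_{\mu_{\ell(\mu)}}$, so $b_\lambda b_\mu = b_{\lambda\cup\mu}$, where $\lambda\cup\mu$ denotes the partition whose multiset of parts is the union of those of $\lambda$ and $\mu$. In particular $\ell(\lambda\cup\mu) = \ell(\lambda) + \ell(\mu)$, and therefore
\[\theta(b_\lambda b_\mu) = \theta(b_{\lambda\cup\mu}) = t^{\ell(\lambda)+\ell(\mu)} = t^{\ell(\lambda)}\,t^{\ell(\mu)} = \theta(b_\lambda)\,\theta(b_\mu).\]

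Finally I would extend by bilinearity of multiplication. Writing $f_1 = \sum_\lambda a_\lambda b_\lambda$ and $f_2 = \sum_\mu c_\mu b_\mu$, we expand $f_1 f_2 = \sum_{\lambda,\mu} a_\lambda c_\mu\, b_\lambda b_\mu$ and apply the linear map $\theta$:
\[\theta_{f_1 f_2} = \sum_{\lambda,\mu} a_\lambda c_\mu\,\theta(b_\lambda b_\mu) = \sum_{\lambda,\mu} a_\lambda c_\mu\,\theta(b_\lambda)\theta(b_\mu) = \Big(\sum_\lambda a_\lambda\,\theta(b_\lambda)\Big)\Big(\sum_\mu c_\mu\,\theta(b_\mu)\Big) = \theta_{f_1}\theta_{f_2}.\]
Together with $\theta(b_\emptyset) = 1$ this shows $\theta$ is a unital algebra homomorphism. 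There is essentially no obstacle here; the only step needing care is the identity $b_\lambda b_\mu = b_{\lambda\cup\mu}$ with the additivity $\ell(\lambda\cup\mu) = \ell(\lambda) + \ell(\mu)$, which is immediate from the definition of a multiplicative basis but is precisely where multiplicativity of $B$ (rather than its being merely a basis) is used — for a non-multiplicative basis such as the Schur basis this identity fails, and so does the conclusion.
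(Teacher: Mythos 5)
Your proof is correct and follows essentially the same route as the paper's: both arguments come down to the identity $b_\lambda b_\mu = b_{\lambda\cup\mu}$ together with $\ell(\lambda\cup\mu)=\ell(\lambda)+\ell(\mu)$, which the paper uses implicitly in its coefficient-by-coefficient convolution computation and you make explicit at the level of basis elements before extending by bilinearity. No gaps; if anything, isolating the basis-element identity is the cleaner way to present the same argument.
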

\begin{proof}
For $k \in \mathbb{N}$,
\begin{align*}
[x^k]\theta(f_1f_2) &= \sum_{\substack{\lambda\\\ell(\lambda)=k}}[b_\lambda]f_1f_2\\
&= \sum_{t=1}^{k}\sum_{\substack{\alpha\\\ell(\alpha)=t}}\sum_{\substack{\beta\\\ell(\beta)=k-t}}[b_\alpha]f_1[b_\beta]f_2\\
&= \sum_{t=1}^{k}\left(\sum_{\substack{\alpha\\\ell(\alpha)=t}}[b_\alpha]f_1\right)\left(\sum_{\substack{\beta\\\ell(\beta)=k-t}}[b_\beta]f_2\right)\\
&= \sum_{t=1}^{k}[x^t]\theta_{f_1}[x^{k-t}]\theta_{f_2}\\
&= [x^k]\theta_{f_1}\theta_{f_2}
\end{align*}
The result follows
\end{proof}
\begin{lemma}
Let $n \in \mathbb{N}$. Then the set of polynomials $x^k(x-1)^{n-k}$ for $k \in [0,n]$ are linearly independent in $\mathbb{Q}[x]$.
\end{lemma}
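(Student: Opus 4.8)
The plan is to induct on $n$. The base case $n=0$ is immediate: the list consists of the single polynomial $x^0(x-1)^0 = 1$, which is nonzero and hence linearly independent.

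For the inductive step, assume the statement for $n-1$, and suppose $\sum_{k=0}^{n}c_k\,x^k(x-1)^{n-k}=0$ in $\mathbb{Q}[x]$. First I would evaluate at $x=1$: for every $k<n$ the term carries the factor $(x-1)^{n-k}$, which vanishes at $x=1$, so only the $k=n$ term survives, and since $1^n=1$ we obtain $c_n=0$. The relation then reads $\sum_{k=0}^{n-1}c_k\,x^k(x-1)^{n-k}=0$, and since every remaining term has $n-k\ge 1$, I can pull out a factor of $x-1$ to get $(x-1)\bigl(\sum_{k=0}^{n-1}c_k\,x^k(x-1)^{(n-1)-k}\bigr)=0$. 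Because $\mathbb{Q}[x]$ is an integral domain and $x-1\neq 0$, the bracketed polynomial is zero; this is precisely a linear dependence among the $n$ polynomials $x^k(x-1)^{(n-1)-k}$ for $k\in[0,n-1]$, so the inductive hypothesis forces $c_0=\dots=c_{n-1}=0$, and we are done.

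There is no real obstacle here; the only thing to be careful about is keeping the exponent bookkeeping straight in the factoring step and handling the base case. As an alternative that avoids induction, one can divide a purported dependence by $(x-1)^n$ to obtain $\sum_{k=0}^{n}c_k\bigl(\tfrac{x}{x-1}\bigr)^k=0$ as a rational function; since $t=\tfrac{x}{x-1}$ is a non-constant rational function it attains infinitely many values, which forces the one-variable polynomial $\sum_k c_k t^k$ to vanish identically and hence all $c_k=0$. I would nevertheless present the inductive argument, as it stays entirely inside $\mathbb{Q}[x]$ and needs nothing beyond evaluation and the integral-domain property.
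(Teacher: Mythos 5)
Your proof is correct, but it takes a different route from the paper's. The paper argues triangularity with respect to the monomial basis: writing out $x^k(x-1)^{n-k}=\sum_{m}(-1)^{n-k-m}\binom{n-k}{m}x^{m+k}$, it observes that $x^k(x-1)^{n-k}$ is the unique member of the family whose expansion contains the monomial $x^k$ once the members with smaller index have been discarded (equivalently, $x^k$ is exactly the lowest-order monomial appearing in the $k$-th polynomial), and so peels off the coefficients $r_0, r_1, \dots, r_n$ in order by looking at the coefficients of $1, x, x^2, \dots$ in the purported dependence. You instead induct on $n$: evaluation at $x=1$ isolates $c_n$, and then factoring out $(x-1)$ in the integral domain $\mathbb{Q}[x]$ reduces to the case $n-1$. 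Both arguments are elementary and short; the paper's is a single pass through one fixed family and makes the ``leading term'' structure explicit, while yours is arguably cleaner bookkeeping (no binomial expansion at all) at the cost of an outer induction. Your argument also exposes the symmetry of the situation --- the paper exploits that $x\nmid(x-1)$ via the valuation at $0$, whereas you exploit that $(x-1)\nmid x$ via evaluation at $1$ --- and either mechanism alone suffices. Your non-inductive alternative via the substitution $t=x/(x-1)$ is also sound and is in fact the change of variables the paper later uses in Theorem 4.4, so it fits the spirit of the paper well, though as you note it steps outside $\mathbb{Q}[x]$ into rational functions.
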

\begin{proof}
Note that each $x^k(x-1)^{n-k} = \sum_{m=0}^{n-k}(-1)^{n-k-m}\binom{n-k}{m}x^{m+k}$. Suppose $\sum_{k=0}^{n}r_kx^k(x-1)^{n-k} = 0$. Note that $r_0 = 0$ because it is the only $x^k(x-1)^{n-k}$ which has a constant term. Suppose that $r_0 = \dots = r_t = 0$ for some $t \in [0, n-1]$. Note then that $r_{t+1} = 0$ as it is the only remaining $x^k(x-1)^{n-k}$ with a term of $x^{t+1}$. It follows that $r_k = 0$ for each $k \in [0,n]$.
\end{proof}
This lemma will allows us to prove results (Proposition 4.2, Theorem 4.1) using the chromatic equivalence of trees.
\begin{proposition}
Let $G$ be a graph. Then $\chi_G = \chi_{X_G}$.
\end{proposition}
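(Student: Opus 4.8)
The plan is to show that $\chi_G$ and $\chi_{X_G}$ agree at every positive integer, and then conclude equality since both are polynomials. For a positive integer $q$, let $\phi_q \colon \Lambda \to \mathbb{Q}$ be the evaluation map obtained by setting $x_1 = \dots = x_q = 1$ and $x_i = 0$ for $i > q$. Each symmetric function restricts to a finite sum of monomials in $x_1, \dots, x_q$ under this substitution, so $\phi_q$ is a well-defined ring homomorphism; the key will be to compute $\phi_q(X_G)$ in two ways.

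First I would compute $\phi_q(X_G)$ directly from the definition of the chromatic symmetric function. The monomial $x_{\kappa(v_1)}\dotsb x_{\kappa(v_{|V(G)|})}$ attached to a proper colouring $\kappa$ is sent by $\phi_q$ to $1$ if $\kappa(v) \in [1,q]$ for every vertex $v$, and to $0$ otherwise. Hence $\phi_q(X_G)$ equals the number of proper colourings $\kappa \colon V(G) \to [1,q]$, which is exactly $\chi_G(q)$ by definition of the chromatic polynomial.

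Next I would compute $\phi_q$ through the power-sum expansion. Since $\phi_q(p_n) = \sum_{i=1}^{q} 1 = q$ for every $n \ge 1$ and $\phi_q$ is multiplicative, $\phi_q(p_\lambda) = q^{\ell(\lambda)}$ for every partition $\lambda$. Writing $X_G = \sum_{\lambda} a_\lambda p_\lambda$ (a finite sum, as $X_G$ is homogeneous), Definition 3.2 gives
\[ \phi_q(X_G) = \sum_{\lambda} a_\lambda q^{\ell(\lambda)} = \sum_{k} \Big(\sum_{\ell(\lambda)=k} a_\lambda\Big) q^k = \sum_{k} \big([x^k]\chi_{X_G}\big) q^k = \chi_{X_G}(q). \]
Combining the two computations yields $\chi_G(q) = \chi_{X_G}(q)$ for every positive integer $q$, and since $\chi_G$ and $\chi_{X_G}$ are both polynomials in $x$ agreeing at infinitely many points, they coincide.

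I do not anticipate a serious obstacle; the only points requiring care are checking that $\phi_q$ really is a ring homomorphism on $\Lambda$ (so that it respects the multiplicative structure of the power-sum basis) and that two polynomials agreeing at all positive integers are equal. As an alternative, one could instead invoke Stanley's power-sum expansion $X_G = \sum_{S \subseteq E(G)} (-1)^{|S|} p_{\lambda(S)}$, apply the map $p_\lambda \mapsto x^{\ell(\lambda)}$ to get $\sum_{S \subseteq E(G)} (-1)^{|S|} x^{|C(\G(S))|}$, and recognize this as Whitney's subgraph expansion of $\chi_G$; but the specialization argument above is self-contained and avoids citing either of those facts.
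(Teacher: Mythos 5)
Your proof is correct, but it takes a genuinely different route from the paper's. The paper proves the identity by lining up two cited expansions over the broken circuit complex: Stanley's $X_G = \sum_{S \in B_G}(-1)^{|S|}p_{\lambda(S)}$ and Whitney's $\chi_G = \sum_{S \in B_G}(-1)^{|S|}x^{|V(G)|-|S|}$, then observing that since each $\G(S)$ is a forest, $\ell(\lambda(S)) = |V(G)|-|S|$, so the map $p_\lambda \mapsto x^{\ell(\lambda)}$ carries one sum to the other term by term. Your argument instead uses the principal specialization $\phi_q$ (set $x_1=\dots=x_q=1$, the rest $0$), computing $\phi_q(X_G)$ once from the colouring definition to get $\chi_G(q)$ and once through the power-sum expansion, using $\phi_q(p_n)=q$, to get $\chi_{X_G}(q)$; this is essentially Stanley's original observation that $X_G(1^q)=\chi_G(q)$, extended by multiplicativity of the specialization. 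Your route is more self-contained --- it needs no broken-circuit machinery and no citation beyond the definition of the chromatic polynomial --- at the cost of verifying that $\phi_q$ is a well-defined algebra homomorphism on $\Lambda$ and invoking the fact that two polynomials agreeing on all positive integers coincide. The paper's route is a two-line consequence of results it cites and uses heavily elsewhere (the complex $B_G$ reappears in Lemma 3.4 and Theorem 3.3), so it keeps the exposition aligned with the rest of the argument; your alternative closing remark about the full subgraph expansion $\sum_{S\subseteq E(G)}$ is also valid and is in fact closer in spirit to what the paper does, just summed over all edge subsets rather than over $B_G$.
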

\begin{proof}
Follows from the formulae (\cite{STANLEY1995166}, Theorems 2.9 and 1.3 respectively)
\[X_G = \sum_{S \in B_G}(-1)^{|S|}p_{\lambda(S)}\]
and
\[\chi_G = \sum_{S\in B_G}(-1)^{|S|}x^{|V(G)|-|S|}\]
Indeed, as each $(V(G), S)$ is a forest, the number of components $k$ of $(V(G), S)$ satisfies $|S| = |V(G)| - k$ or $k = |V(G)| - |S|$ and hence $\ell(\lambda(S)) = |V(G)| - |S|$.
\end{proof}
We let $\tau_G := \tau_{X_G}$ and $\tau_{(G,\omega)} := \tau_{X_{(G,\omega)}}$. As a result of the above proposition, we more generally adopt the notation that if $\{b_\lambda : \lambda\vdash n\ge0\}$ is a multiplicative basis and $\theta : \Lambda \to \mathbb{Q}[t]$ is the map $b_\lambda\mapsto t^{\ell(\lambda)}$, we say $\theta_G := \theta_{X_G}$ and $\theta_{(G,\omega)} = \theta_{X_{(G,\omega)}}$. Indeed, $\chi_G$ the chromatic polynomial and $\chi_G$ the power-sum basis polynomial are the same by the above proposition. 
\begin{proposition}
Let $F$ be a forest of order $n$ with $m$ parts. Then $\tau_F = t^{m}$. 
\end{proposition}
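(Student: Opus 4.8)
The plan is to reduce to a single tree by multiplicativity and then compute $\tau_T$ by combining Stanley's power-sum expansion of $X_G$ with chromatic reciprocity. First, since $F$ is the disjoint union of its $m$ tree components $T^{(1)},\dots,T^{(m)}$, we have $X_F=\prod_{i=1}^m X_{T^{(i)}}$, so Lemma 3.1 (applied to the multiplicative path basis) gives $\tau_F=\prod_{i=1}^m\tau_{T^{(i)}}$; it therefore suffices to show $\tau_T=t$ for every tree $T$ on $j\ge 1$ vertices. (Alternatively one can skip this reduction and run the computation below directly for $F$.)

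The key step is a closed formula for $\tau$. Let $\Phi\colon\Lambda\to\Lambda$ be the algebra automorphism determined by $p_\mu\mapsto X_{P_\mu}$; by Chromatic Reciprocity (Theorem \ref{thm:chromrec}) it is an involution, so $\Phi(X_{P_\mu})=p_\mu$. Let $\chi\colon\Lambda\to\mathbb{Q}[t]$ be the linear map $p_\mu\mapsto t^{\ell(\mu)}$, which by Proposition 3.1 satisfies $\chi(X_G)=\chi_G(t)$, the chromatic polynomial. Then $\tau$ and $\chi\circ\Phi$ agree on the basis $\{X_{P_\mu}\}$ — both send $X_{P_\mu}$ to $t^{\ell(\mu)}$ — so $\tau=\chi\circ\Phi$. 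Applying $\Phi$ and then $\chi$ to the expansion $X_G=\sum_{S\in B_G}(-1)^{|S|}p_{\lambda(S)}$ and using $\chi(X_{P_{\lambda(S)}})=\chi_{P_{\lambda(S)}}(t)$, together with multiplicativity of chromatic polynomials, $\chi_{P_\ell}(t)=t(t-1)^{\ell-1}$, and the fact (noted in the proof of Proposition 3.1) that $\G(S)$ is a forest with $\ell(\lambda(S))=|V(G)|-|S|$ components, we obtain
$$\tau_G=\sum_{S\in B_G}(-1)^{|S|}\,t^{|V(G)|-|S|}(t-1)^{|S|}.$$

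Now specialize. For a tree $T$ on $j$ vertices, $B_T$ is the set of \emph{all} subsets of $E(T)$ (a forest contains no cycles, hence no broken circuits) and $|E(T)|=j-1$, so grouping by $k=|S|$,
$$\tau_T=\sum_{k=0}^{j-1}\binom{j-1}{k}(-1)^k t^{\,j-k}(t-1)^k=t\sum_{k=0}^{j-1}\binom{j-1}{k}t^{\,j-1-k}\bigl(-(t-1)\bigr)^k=t\bigl(t-(t-1)\bigr)^{j-1}=t.$$
Hence $\tau_F=\prod_{i=1}^m t=t^m$. (Running the same computation directly for $F$ uses $|E(F)|=n-m$ and collapses to $t^{\,n-(n-m)}=t^m$ in one step.)

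I expect the main obstacle to be establishing $\tau=\chi\circ\Phi$ correctly: one must genuinely pass through the reciprocity involution, since one cannot read $\tau_G$ off the power-sum expansion of $X_G$ directly — even though $X_{P_1}=p_1$ (so $X_{P_{(1^j)}}=p_{(1^j)}$), the coefficient functionals $[X_{P_\mu}](\cdot)$ and $[p_\mu](\cdot)$ differ because the remaining basis elements differ. Once that identity is in place, the broken-circuit bookkeeping is vacuous for forests and the remaining binomial identity is routine.
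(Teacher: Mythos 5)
Your proof is correct, but it takes a genuinely different route from the paper's. The paper applies $\chi$ to the path-basis expansion $X_F=\sum_\lambda a_\lambda X_{P_\lambda}$, observes that the left side is $\chi_F=x^m(x-1)^{n-m}$ and the right side is $\sum_\lambda a_\lambda x^{\ell(\lambda)}(x-1)^{n-\ell(\lambda)}$, and then invokes the linear independence of $\{x^k(x-1)^{n-k}\}_{k=0}^n$ (Lemma 3.2) to conclude that the level-$k$ coefficient sums are $\delta_{km}$; no reciprocity, no broken-circuit expansion, and no explicit computation of any coefficient is needed. You instead build the identity $\tau=\chi\circ\Phi$ from Chromatic Reciprocity (Theorem \ref{thm:chromrec}), push it through Stanley's power-sum expansion $X_G=\sum_{S\in B_G}(-1)^{|S|}p_{\lambda(S)}$ to obtain the closed formula $\tau_G=\sum_{S\in B_G}(-1)^{|S|}t^{|V(G)|-|S|}(t-1)^{|S|}$, and then collapse the resulting binomial sum for a forest. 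Your argument is heavier but buys more: the intermediate formula is exactly $\tau_G=\sum_k[t^k]\chi_G(t)\,t^k(t-1)^{n-k}$, which the paper only establishes much later (Theorem 4.4(2) and Theorem 4.5), and your identity $\tau=\chi\circ\Phi$ is the engine behind the paper's Theorem 3.2 and Proposition 4.2 as well. The paper's argument is the more economical one for this particular statement, and its linear-independence technique is reused verbatim to prove Theorem 3.1 (independence of the tree polynomial from the choice of tree basis), which your computation alone would not immediately give. Your closing remark is also on point: one genuinely must pass through the involution, since $[X_{P_\mu}](\cdot)$ and $[p_\mu](\cdot)$ are different functionals, and your argument handles this correctly. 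There is no circularity in your use of Theorem \ref{thm:chromrec}, Lemma 3.1, or Proposition 3.1, all of which precede this proposition.
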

\begin{proof}
Write $X_F = \sum_{\lambda\vdash n}a_\lambda X_{P_\lambda}$. Note that $\chi_{P_\lambda} = x^{\ell(\lambda)}(x-1)^{n-\ell(\lambda)}$. Applying $\chi$ to both sides,
\begin{align*}
x^m(x-1)^{n-m} &= \chi_F= \chi_{X_F}= \sum_{\lambda\vdash n}a_\lambda\chi_{X_{P_\lambda}}=\sum_{\lambda\vdash n}a_\lambda x^{\ell(\lambda)}(x-1)^{n-\ell(\lambda)}
\end{align*}
Because the $x^t(x-1)^{n-t}$ are linearly independent, the result follows by comparing coefficients.
\end{proof}
\begin{cor}
For a tree $T$, $\tau_T = x$.
\end{cor}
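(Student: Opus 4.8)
The plan is to obtain this as an immediate consequence of Proposition 3.2. A tree $T$ is by definition a connected acyclic graph, hence a forest with exactly one connected component; so if $T$ has $n$ vertices, it is a forest of order $n$ with $m = 1$ part. Substituting $m = 1$ into Proposition 3.2 gives $\tau_T = t^1 = x$ (here $t$ and $x$ denote the same indeterminate, $\tau$ being the linear map extending $X_{P_\lambda} \mapsto t^{\ell(\lambda)}$). That is essentially the whole argument.

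If one prefers a self-contained derivation rather than an appeal to Proposition 3.2, I would rerun the argument used to prove that proposition, specialized to a tree. Expand $X_T = \sum_{\lambda \vdash n} a_\lambda X_{P_\lambda}$ in the path tree basis (Corollary 2.1) and apply the algebra homomorphism $\chi$. By Proposition 3.1 we have $\chi_{X_{P_\lambda}} = \chi_{P_\lambda} = x^{\ell(\lambda)}(x-1)^{n-\ell(\lambda)}$, and since the chromatic polynomial of a tree on $n$ vertices is $x(x-1)^{n-1}$, this yields
\[ x(x-1)^{n-1} = \sum_{\lambda \vdash n} a_\lambda\, x^{\ell(\lambda)}(x-1)^{n-\ell(\lambda)}. \]
The polynomials $x^k(x-1)^{n-k}$ for $k \in [0,n]$ are linearly independent by Lemma 3.2, so comparing coefficients forces $\sum_{\ell(\lambda)=1} a_\lambda = 1$ and $\sum_{\ell(\lambda)=k} a_\lambda = 0$ for all $k \neq 1$. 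By Definition 3.1 this says precisely $\tau_T = x$.

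I do not expect any genuine obstacle here: the only point worth isolating is the trivial observation that a tree is a connected forest, which pins the number of parts to one. The corollary is nonetheless worth recording because the value $\tau_T = x$ is independent of which tree $T$ one takes — exactly the coherence statement needed later to make the definition of the tree polynomial well-posed.
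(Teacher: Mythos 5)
Your proof is correct and matches the paper's intent exactly: the corollary is an immediate specialization of Proposition 3.2 to a forest with $m=1$ parts. The self-contained rerun of the argument via $\chi_T = x(x-1)^{n-1}$ and Lemma 3.2 is a faithful unwinding of the same proof, not a different route.
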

\begin{theorem}
Let $\{T_n\}_{n=1}^{\infty}$ be a family of trees with each $T_n$ having $n$ vertices and $\{X_{T_\lambda} : \lambda\vdash n\ge0\}$ its corresponding tree basis. For any weighted graph $(G,\omega)$ and $k\in\mathbb{N}$,
\[\sum_{\substack{\lambda\\\ell(\lambda)=k}}[X_{T_\lambda}]X_{(G,\omega)} = \sum_{\substack{\lambda\\\ell(\lambda)=k}}[X_{P_\lambda}]X_{(G,\omega)}\]
\end{theorem}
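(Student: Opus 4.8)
The plan is to recognize both sums as the values of two algebra homomorphisms $\Lambda \to \mathbb{Q}[x]$ at $X_{(G,\omega)}$, and then to show the two homomorphisms coincide because they agree on a generating set of $\Lambda$.

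First I would set up notation. Let $\theta$ be the $\{X_{T_\lambda}\}$-polynomial map, i.e.\ the linear extension of $X_{T_\lambda} \mapsto x^{\ell(\lambda)}$, and recall that $\tau$ is the $\{X_{P_\lambda}\}$-polynomial map, the linear extension of $X_{P_\lambda} \mapsto x^{\ell(\lambda)}$. With this notation the left-hand side of the claimed identity is precisely $[x^k]\theta(X_{(G,\omega)})$ and the right-hand side is $[x^k]\tau(X_{(G,\omega)}) = [x^k]\tau_{(G,\omega)}$, so it suffices to prove the stronger statement $\theta = \tau$ as maps $\Lambda \to \mathbb{Q}[x]$; evaluating at $X_{(G,\omega)}$ and comparing coefficients of $x^k$ then yields the theorem.

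Next I would invoke the structure of $\Lambda$. Since $\{X_{T_\lambda} : \lambda \vdash n \ge 0\}$ is a multiplicative basis of $\Lambda$ (Corollary 2.1), the one-part elements $X_{T_1}, X_{T_2}, \dots$ are algebraically independent and generate $\Lambda$ as an algebra, so $\Lambda = \mathbb{Q}[X_{T_1}, X_{T_2}, \dots]$. By Lemma 3.1 (applied to the multiplicative bases $\{X_{T_\lambda}\}$ and $\{X_{P_\lambda}\}$ respectively), both $\theta$ and $\tau$ are algebra homomorphisms $\Lambda \to \mathbb{Q}[x]$. Hence it is enough to check that $\theta$ and $\tau$ agree on each generator $X_{T_n}$, $n \ge 1$. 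On the one hand, by definition $\theta(X_{T_n}) = x^{\ell((n))} = x$. On the other hand, $T_n$ is a tree, hence a forest of order $n$ with a single part, so Proposition 3.2 (equivalently Corollary 3.1) gives $\tau(X_{T_n}) = \tau_{T_n} = x$. Thus the two homomorphisms agree on all generators, so $\theta = \tau$, which completes the argument.

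The only genuinely substantive point is the reduction itself: identifying that a multiplicative basis exhibits $\Lambda$ as a free polynomial algebra on its one-part elements, so that two algebra homomorphisms out of $\Lambda$ are equal as soon as they agree on those elements. Everything else is bookkeeping, with the essential input being the equality $\tau_{T_n} = x$ from Proposition 3.2 — which is exactly where the chromatic equivalence of trees (packaged through Lemma 3.2) does the real work. I do not anticipate a serious obstacle beyond making sure Proposition 3.2 and Lemma 3.1 are stated in sufficient generality to apply here, which they are.
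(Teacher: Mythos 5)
Your proof is correct, and it takes a genuinely different route from the paper's. The paper proves the identity by expanding $X_{(G,\omega)} = \sum_\lambda a_\lambda X_{P_\lambda} = \sum_\lambda b_\lambda X_{T_\lambda}$, applying the power-sum polynomial map $\chi$ to both expansions, using the chromatic equivalence of trees to see that both become $\sum_\lambda a_\lambda\, x^{\ell(\lambda)}(x-1)^{N-\ell(\lambda)}$ and $\sum_\lambda b_\lambda\, x^{\ell(\lambda)}(x-1)^{N-\ell(\lambda)}$ respectively, and then invoking the linear independence of $\{x^k(x-1)^{N-k}\}$ (Lemma 3.2) to compare coefficients directly. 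You instead observe that both sides are coefficients of two algebra homomorphisms $\theta, \tau : \Lambda \to \mathbb{Q}[x]$ (via Lemma 3.1 applied to the two multiplicative bases), that $\Lambda$ is generated as an algebra by $X_{T_1}, X_{T_2}, \dots$ because $\{X_{T_\lambda}\}$ is a multiplicative basis, and that the two homomorphisms agree on these generators since $\theta(X_{T_n}) = x = \tau_{T_n}$ by Corollary 3.1. The same essential inputs (chromatic equivalence of trees and Lemma 3.2) still do the work, but they enter only through Proposition 3.2 rather than being re-deployed in the proof itself. What your approach buys: it proves the stronger statement $\theta = \tau$ on all of $\Lambda$, with no homogeneity hypothesis and no need to track the total weight $N$, and it isolates the conceptual reason the theorem holds (two homomorphisms out of a free polynomial algebra agreeing on generators). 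What the paper's approach buys is self-containedness for the specific statement, at the cost of repeating the linear-independence argument already used for Proposition 3.2.
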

\begin{proof}
Write $X_{(G,\omega)} = \sum_{\lambda}a_\lambda X_{P_\lambda} = \sum_{\lambda}b_\lambda X_{T_\lambda}$. Letting $N$ be the total weight of $(G,\omega)$, we obtain
\begin{align*}
\chi_{(G,\omega)} &= \sum_{\lambda}a_\lambda\chi_{X_{P_\lambda}}\\
&= \sum_{\lambda}a_\lambda x^{\ell(\lambda)}(x-1)^{N-\ell(\lambda)}\\
&= \sum_{\lambda}b_\lambda x^{\ell(\lambda)}(x-1)^{N-\ell(\lambda)}\\
&= \sum_{\lambda}b_\lambda \chi_{X_{T_\lambda}}
\end{align*}
The result follows by the linear independence of $x^{\ell(\lambda)}(x-1)^{N-\ell(\lambda)}$ and comparing coefficients.
\end{proof}
With Theorem 3.1 as motivation, we call $\tau_f$ the \emph{tree polynomial} of $f$ since $\tau_f$ is independent of the family of trees chosen. For a graph $G$ we say $\tau_G$ is the tree polynomial of $G$ and $\tau_{(G,\omega)}$ the tree polynomial of $(G,\omega)$. Using the chromatic reciprocity of $p_{\lm}$ and $X_{P_{\lm}}$ (Theorem \ref{thm:chromrec}), we obtain the formula
\begin{theorem}
For a graph $G$ of order $n$,
\[[x^k]\tau_G = (-1)^{n+k}\sum_{m=1}^{k}\binom{n-m}{k-m}[x^m]\chi_G\]
\end{theorem}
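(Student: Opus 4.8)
The plan is to route everything through the chromatic reciprocity of Theorem~\ref{thm:chromrec}. Let $\Phi:\Lambda\to\Lambda$ be the involutive automorphism with $\Phi(p_\lambda)=X_{P_\lambda}$; since $\Phi$ is an involution we also have $\Phi(X_{P_\lambda})=p_\lambda$. The observation that drives the proof is that the two linear maps $\tau,\chi:\Lambda\to\mathbb{Q}[x]$ (extending $X_{P_\lambda}\mapsto x^{\ell(\lambda)}$ and $p_\lambda\mapsto x^{\ell(\lambda)}$ respectively) satisfy $\tau=\chi\circ\Phi$: both sides send $X_{P_\lambda}\mapsto x^{\ell(\lambda)}$, hence they agree on the basis $\{X_{P_\lambda}\}$ of $\Lambda$. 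Evaluating at $X_G$ gives $\tau_G=\chi\bigl(\Phi(X_G)\bigr)$.

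Next I would make $\chi(\Phi(X_G))$ explicit. Write $X_G=\sum_{\lambda\vdash n}a_\lambda p_\lambda$, so $a_\lambda=[p_\lambda]X_G$ and, by Proposition~3.1, $[x^j]\chi_G=[x^j]\chi_{X_G}=\sum_{\ell(\lambda)=j}a_\lambda$. Then $\Phi(X_G)=\sum_\lambda a_\lambda X_{P_\lambda}$, and since $\chi$ is linear with $\chi(X_{P_\lambda})=\chi_{X_{P_\lambda}}=\chi_{P_\lambda}=x^{\ell(\lambda)}(x-1)^{n-\ell(\lambda)}$ (Proposition~3.1 again, together with the fact that a disjoint union of paths with $n$ total vertices has $\ell(\lambda)$ components), collecting terms by length yields
\[
\tau_G=\sum_{\lambda\vdash n}a_\lambda\,x^{\ell(\lambda)}(x-1)^{n-\ell(\lambda)}=\sum_{j=1}^{n}\bigl([x^j]\chi_G\bigr)\,x^{j}(x-1)^{n-j}.
\]

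It remains to extract $[x^k]$. From the binomial expansion $x^{j}(x-1)^{n-j}=\sum_{i=0}^{n-j}(-1)^{n-j-i}\binom{n-j}{i}x^{i+j}$ one reads off $[x^k]\bigl(x^{j}(x-1)^{n-j}\bigr)=(-1)^{n-k}\binom{n-j}{k-j}$, which is $0$ unless $j\le k$. Substituting into the displayed identity, using $[x^0]\chi_G=0$ so that the index may start at $1$, and renaming $j$ as $m$, gives $[x^k]\tau_G=(-1)^{n-k}\sum_{m=1}^{k}\binom{n-m}{k-m}[x^m]\chi_G$; since $(-1)^{n-k}=(-1)^{n+k}$ this is exactly the claimed formula.

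The computation itself is routine; the one place that needs care — and the only place Theorem~\ref{thm:chromrec} enters — is the identity $\tau=\chi\circ\Phi$, which uses in an essential way that the reciprocity map is an involution (so that $\Phi(X_{P_\lambda})=p_\lambda$, not merely $\Phi(p_\lambda)=X_{P_\lambda}$). As an alternative one could avoid $\Phi$ altogether by applying $\chi$ directly to a path-basis expansion $X_G=\sum_\lambda b_\lambda X_{P_\lambda}$, obtaining $\chi_G=\sum_{j}\bigl([x^j]\tau_G\bigr)x^{j}(x-1)^{n-j}$; but then one must invert this triangular change of basis to solve for $[x^k]\tau_G$, which is strictly more work than the reciprocity route above.
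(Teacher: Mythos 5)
Your proof is correct and follows essentially the same route as the paper's: both hinge on the chromatic reciprocity involution (the paper uses it in the form $[X_{P_\Delta}]p_\lambda=[p_\Delta]X_{P_\lambda}$, which is precisely your identity $\tau=\chi\circ\Phi$), together with $\chi_{P_\lambda}=x^{\ell(\lambda)}(x-1)^{n-\ell(\lambda)}$, Proposition 3.1, and the binomial extraction of $[x^k]$. Your packaging of the argument as a single composition of linear maps is a slightly cleaner presentation of the identical computation.
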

\begin{proof}
Writing $X_G = \sum_{\lambda\vdash n}a_\lambda p_\lambda$ we calculate,
\begin{align*}
\sum_{\substack{\lambda\vdash n\\\ell(\lambda)=k}}[X_{P_\lambda}]X_G &= \sum_{\lambda\vdash n}a_\lambda\sum_{\substack{\Delta\vdash n\\\ell(\Delta)=k}}[X_{P_\Delta}]p_\lambda\\
(\text{\cite{aliniaeifard2021extended} Thm 6.3})&=\sum_{\lambda\vdash n}a_\lambda\sum_{\substack{\Delta\vdash n\\\ell(\Delta)=k}}[p_\Delta]X_{P_\lambda}\\
&= \sum_{m=1}^{k}\sum_{\substack{\lambda\vdash n\\\ell(\lambda)=m}}(-1)^{n-k}a_\lambda\binom{n-m}{k-m}\\
&= (-1)^{n-k}\sum_{m=1}^{k}\binom{n-m}{k-m}\sum_{\substack{\lambda\vdash n\\\ell(\lambda)=m}}a_\lambda\\
&= (-1)^{n-k}\sum_{m=1}^{k}\binom{n-m}{k-m}[x^m]\chi_G
\end{align*}
\end{proof}
We may use this formula to interpret the tree polynomial's coefficients. We introduce some further definitions and lemmas.
\begin{definition}
Let $G$ be a graph and $H \le G$. Let $C(H) = \{H_1, \dots, H_m\}$. Define $\nabla_G(H) := \bigcup_{i=1}^{m}\delta_G(H_i)$ where 
\[\delta_G(H_i) = \{uv \in E(G) : u \in V(H_i),\ v \in V(G)\setminus V(H_i)\text{ or }u \in V(G)\setminus V(H_i), v \in V(H_i)\}\]
In other words, $\delta_G(H_i)$ is the cut of $H_i$ calculated in $G$.
\end{definition}
\begin{definition}
Let $G$ be a graph. For a subset of the edges $E$, we let $\G(E)$ denote the spanning subgraph of $G$ induced by $E$. We also define $\ell(E) := |C(\G(E))|$ and $C(G) = C(\G(E))$.
\end{definition}
\begin{definition}
For a graph $G = (V, E)$ and $S \subseteq E$ we write $\nabla_G(S) = \nabla_G(\G(S))$. 
\end{definition}
\begin{definition}
Let $G$ be a graph and $F$ a subforest of $G$. Let $e \in E(G)\setminus E(F)$. If $F \cup e$ produces a cycle, then the cycle is unique and we call it the \emph{fundamental cycle} of $e$.
\end{definition}
\begin{lemma}
Let $G$ be a graph and $\sigma \in B_G$ with $C(\sigma) = \{D_1, \dots, D_k\}$. Let $S = \nabla_G(\G(\sigma))$. If $\hat{e}$ is the minimal edge in $S$ then $\sigma\cup\hat{e} \in B_G$.
\end{lemma}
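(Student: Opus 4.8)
The plan is to argue by contradiction, leaning on the observation that $\nabla_G(\G(\sigma))$ is precisely the set of edges of $G$ whose two endpoints lie in distinct components of the forest $\G(\sigma)$. First I would record this characterization: since the vertex sets $V(D_1),\dots,V(D_k)$ partition $V(G)$, an edge $e\in E(G)$ lies in some cut $\delta_G(D_i)$ if and only if its two endpoints lie in different components of $\G(\sigma)$, while an edge internal to a single $D_i$ lies in no cut. In particular $\hat e\notin\sigma$, and $\hat e$ joins two distinct components of $\G(\sigma)$; since $\sigma\in B_G$ forces $\G(\sigma)$ to be a forest and adjoining an edge between two components of a forest leaves a forest, $\G(\sigma\cup\hat e)$ is acyclic.

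Next I would assume for contradiction that $\sigma\cup\hat e$ contains a broken circuit $W=C\setminus\{\tilde f\}$, where $C$ is a cycle of $G$ and $\tilde f$ is the minimal edge of $C$. Because $\sigma\in B_G$ contains no broken circuit, $W$ is not contained in $\sigma$, so $\hat e\in W\subseteq C$; thus $\hat e\ne\tilde f$, and since $\tilde f$ is minimal in $C$ we get $\tilde f<\hat e$. Also $\tilde f\notin\sigma\cup\hat e$, since otherwise $C\subseteq\sigma\cup\hat e$ would be a cycle inside the acyclic graph $\G(\sigma\cup\hat e)$. Hence $E(C)=\{\hat e,\tilde f\}\cup(E(C)\cap\sigma)$.

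The crux of the argument is then to show $\tilde f\in S$. I would label each vertex of the cyclic sequence $C$ by the index of the component of $\G(\sigma)$ containing it. Every edge of $C$ lying in $\sigma$ joins two vertices in the same component, so the label is unchanged across it, and the label can change only across $\hat e$ or $\tilde f$. Since traversing $C$ returns to the initial label, the number of edges of $C$ across which the label changes is even; as it changes across $\hat e$, it must also change across $\tilde f$, so the endpoints of $\tilde f$ lie in distinct components of $\G(\sigma)$, i.e.\ $\tilde f\in\nabla_G(\G(\sigma))=S$. But $\hat e$ is the minimal edge of $S$, so $\hat e\le\tilde f$, contradicting $\tilde f<\hat e$; therefore no such broken circuit exists and $\sigma\cup\hat e\in B_G$.

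The hard part will be making this parity step airtight across degenerate configurations — for instance when $\hat e$ and $\tilde f$ share a vertex in $C$, or when the portion of $C$ between them collapses to a single vertex. Phrasing the argument through the component-label function, rather than through an explicit decomposition of $C\setminus\{\hat e,\tilde f\}$ into two paths, is what I expect to keep those cases from needing separate treatment; one should also double-check the routine facts that $C$ really contains $\tilde f$ (it is the minimum edge of $C$) and that $\hat e$, being a cut edge, is not already in $\sigma$.
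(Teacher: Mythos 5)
Your proof is correct, and it rests on the same key observation as the paper's: the problematic small edge must join two distinct components of $\G(\sigma)$, hence lies in $S=\nabla_G(\G(\sigma))$ and is therefore at least $\hat e$ in the edge ordering, a contradiction. The only difference is packaging --- the paper uses its earlier identification of $B_G$ with edge sets of internal forests and shows that no edge of $E(G)\setminus(\sigma\cup\hat e)$ is externally active, whereas you argue directly from the broken-circuit definition via the minimal edge $\tilde f$ of the offending cycle. Your component-labelling step is airtight as stated: only $\hat e$ and $\tilde f$ can change the label along $C$, and a closed walk cannot change it exactly once, so $\tilde f\in S$ follows without any case analysis of degenerate configurations.
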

\begin{proof}
Let $e \in E(G)\setminus \sigma$. If $e$ is externally active in $\sigma\cup\hat{e}$ it must have one end in $D_i$ and one end in $D_j$ with $i\ne j$ (if it didn't, it would have both ends in $D_t$ which would contradict $\G(\sigma)$ having zero external activity). The fundamental cycle of $e$ must then contain $\hat{e}$ which is smaller than $e$. So $e$ cannot be externally active in $\G(\sigma\cup\hat{e})$. The result follows.
\end{proof}
The following lemma is the crux in understanding the tree polynomial's coefficients.
\begin{lemma}
Let $G$ be a graph with $n$ vertices and fix $\pi \in B_G$. Then, $\sum_{\substack{\sigma\supseteq\pi\\\sigma\in B_G}}(-1)^{\ell(\sigma)}$ is $(-1)^{|C(G)|}$ times the number of maximal $T \supseteq \pi$ in $B_G$ such that the minimal edge in $\nabla_G(\pi)$ is not contained in $T$ and for every other edge $e \in \nabla_G(\pi) \cap T$, $e$ is contained in the fundamental cycle of some smaller edge in $\nabla_G(\pi)$.
\end{lemma}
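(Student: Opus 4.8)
The plan is to analyze the sum $\sum_{\sigma \supseteq \pi,\ \sigma \in B_G} (-1)^{\ell(\sigma)}$ by a sign-reversing involution keyed to the minimal edge of $\nabla_G(\pi)$. Let $\hat e$ be the minimal edge of $\nabla_G(\pi)$. First I would observe that adding any edge of $\nabla_G(\pi)$ to $\G(\pi)$ strictly decreases $\ell$ by $1$ (since such an edge joins two distinct components of $\G(\pi)$), while $\hat e$ in particular keeps us inside $B_G$ by Lemma 3.1. The key structural point is to set up a pairing on the terms $\sigma$ that cancels most of them. I would partition $\{\sigma \in B_G : \sigma \supseteq \pi\}$ according to whether $\hat e \in \sigma$: given any $\sigma \supseteq \pi$ with $\hat e \notin \sigma$ for which $\sigma \cup \hat e \in B_G$, pair $\sigma$ with $\sigma \cup \hat e$. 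These two sets have opposite sign $(-1)^{\ell}$ (differing by exactly one component), so they cancel. What survives the involution are exactly those $\sigma$ for which the pairing is undefined — precisely the $\sigma \ni \hat e$ whose partner $\sigma \setminus \hat e$ would fail to have $\sigma = (\sigma\setminus\hat e)\cup\hat e$ land back under the same rule, together with a careful bookkeeping of when $\sigma \cup \hat e \notin B_G$. I would need to check that the fixed points of the involution are the maximal elements $T \supseteq \pi$ of the stated form.

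More precisely, the second step is to reduce from arbitrary $\sigma$ to maximal $\sigma$. Here I would use Lemma 2.1: every $\sigma \in B_G$ sits below a maximal element of $B_G$, i.e.\ the edge set of a maximally connected spanning forest. I would argue that $\sum_{\sigma \supseteq \pi} (-1)^{\ell(\sigma)}$ can be reorganized by grouping each $\sigma$ with the maximal $T \supseteq \sigma$ reachable by repeatedly adjoining the current minimal edge of the relevant cut; within each such group the alternating sum telescopes to $(-1)^{\ell(T)} = (-1)^{|C(G)|}$ (all maximal internal-preserving forests have $|C(G)|$ components by Lemma 2.1 and the definition of maximally connected), contributing $+(-1)^{|C(G)|}$ exactly once per group. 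The groups are then indexed by the maximal $T$ that arise this way, and I would identify these as exactly the $T$ described: $\hat e \notin T$ (because $\hat e$ would have been adjoined first if it could be, forcing a different group), and every other $e \in \nabla_G(\pi) \cap T$ lies in the fundamental cycle of a smaller edge of $\nabla_G(\pi)$ (this is the "no externally active behaviour relative to the cut" condition, the local analogue of internality restricted to $\nabla_G(\pi)$, ensuring $T$ is a valid terminal point of the adjunction process).

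The cleanest way to run this may be induction on $|E(G) \setminus \pi|$, or more precisely on the number of edges of $\nabla_G(\pi)$ not forced into $\sigma$: peel off $\hat e$, split the sum over $\sigma \not\ni \hat e$ versus $\sigma \ni \hat e$, use Lemma 3.1 to biject a large chunk of the first class with the second (those $\sigma$ with $\sigma\cup\hat e \in B_G$), and recurse on $G$ with the larger forced set $\pi \cup \hat e$, whose cut $\nabla_G(\pi \cup \hat e)$ has a new minimal edge. The base case is when $\nabla_G(\pi) = \emptyset$, i.e.\ $\G(\pi)$ is already maximally connected: then $\pi$ is the unique maximal $T \supseteq \pi$, it vacuously satisfies the conditions, and the sum is the single term $(-1)^{\ell(\pi)} = (-1)^{|C(G)|}$.

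The main obstacle I anticipate is controlling the fundamental-cycle condition under the recursion: when I pass from $\pi$ to $\pi \cup \hat e$, the cut $\nabla_G(\pi \cup \hat e)$ changes, edges that were in $\nabla_G(\pi)$ may leave the cut (those internal to the merged component) or stay, and I must verify that "$e$ is in the fundamental cycle of some smaller edge of $\nabla_G(\pi)$" is exactly matched by the analogous condition one level down plus the handling of $\hat e$ itself. Making the correspondence between "fixed points of the involution on $B_G$ above $\pi$" and "maximal $T$ with the cycle condition" precise — in particular showing it is a bijection and not just an injection — is where the real work lies; Lemma 3.1 and the minimality arguments in the proof of Lemma 2.1 are the tools I expect to lean on most heavily.
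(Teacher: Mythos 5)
Your high-level plan---cancel terms of $\sum_{\sigma\supseteq\pi}(-1)^{\ell(\sigma)}$ in pairs $\sigma \leftrightarrow \sigma\cup e$ working through the edges of $\nabla_G(\pi)$ in increasing order, and identify the survivors with the maximal $T$ in the statement---is exactly the paper's strategy, and your analysis of the first round is essentially right: every $\sigma\ni\hat e$ cancels against $\sigma\setminus\hat e$, and (by the argument of Lemma 3.1 adapted to $\sigma\supseteq\pi$) the only survivors are the $\sigma$ in which $\hat e$ is ``dead,'' i.e.\ $\hat e\notin\sigma$ and $\sigma\cup\hat e$ contains a cycle. But your proposed recursion breaks exactly here. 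You say you would ``recurse on $G$ with the larger forced set $\pi\cup\hat e$, whose cut $\nabla_G(\pi\cup\hat e)$ has a new minimal edge.'' The surviving terms after round one are \emph{not} indexed by $\{\sigma\in B_G:\sigma\supseteq\pi\cup\hat e\}$---they all satisfy $\hat e\notin\sigma$---so the inductive hypothesis you want to invoke does not apply to the remaining sum. Moreover the lemma's condition is phrased in terms of fundamental cycles of edges of the \emph{original} cut $\nabla_G(\pi)$, whereas your recursion would naturally produce a condition about the shrinking cuts $\nabla_G(\pi\cup\hat e)$, $\nabla_G(\pi\cup\hat e\cup\cdots)$, and you correctly flag (but do not resolve) that these do not obviously match. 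Your alternative framing in the second paragraph---fibering the sum over maximal $T$ and claiming each fiber ``telescopes'' to $(-1)^{|C(G)|}$---is an unproved assertion that is essentially the whole content of the lemma.

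The paper avoids this trap by never changing $\pi$ or its cut: it fixes $\nabla_G(\pi)=\{e_1<\cdots<e_m\}$ once, defines the invariant $(*)$ (``for each $i\le t$, either $e_i$ is dead in $\sigma$, or $e_i\in\sigma$ and $e_i$ lies in the fundamental cycle of some smaller $e_j\notin\sigma$''), and shows by induction on $t$ that after cancelling pairs $\sigma,\sigma\cup e_{t+1}$ the surviving set $S_{t+1}$ is exactly the maximal $(t{+}1)$-ordered subset of $S_0$; at $t=m$ the survivors are forced to be maximally connected (hence maximal in $B_G$ with exactly $|C(G)|$ components) and to satisfy the stated fundamental-cycle condition. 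To repair your argument you would need to replace the recursion on $\pi\cup\hat e$ with this kind of invariant-maintenance over the fixed list $e_1,\dots,e_m$, and actually carry out the verification that the survivors of all $m$ rounds are precisely the claimed maximal $T$---the step you yourself identify as ``where the real work lies'' and leave undone. As written, the proposal is a correct outline of the right approach with the decisive step missing and the proposed induction set up on the wrong index set.
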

\begin{proof}
Let $m := |\nabla_G(\pi)|$ and $\nabla_G(\pi) = \{e_1, \dots, e_m\}$ where $i < j$ if and only if the edge $e_i$ is less than the edge $e_j$. Construct sets $S_0, S_1, \dots, S_m$ recursively. Let $S_0 := \{\sigma \in B_G : \sigma \supseteq \pi\}$. Say an edge $e \in E(G)\setminus \sigma$ is dead in $\sigma$ if $\sigma \cup \{e\}$ contains a cycle. Let $S$ be a set of edgesets in $B_G$. Say $S$ is $t-$ordered with respect to $\pi$ if for all $\sigma \in S$,
\begin{center}
For $i \in [2,n]$, $e_i$ is either dead or $e_i \in \sigma$ and there exists $j < i$ with $e_j \not\in \sigma$ with $e_i$ in the fundamental cycle of $e_j$. $(*)$
\end{center}
Note that $S_0$ is a $0-$ordered edgeset. It is also an inclusion wise maximal $0-$ordered edgeset in $B_G$ containing $S_0$ because it is $S_0$. Suppose $S_t \in B_G$ is an inclusion-wise maximal $t-$ordered edgeset contained in $S_0$. Construct $S_{t+1}$ by removing $\sigma, \sigma \cup e_{t+1}$ for each $\sigma$ where $e_{t+1}$ is not dead. Note that $\ell(\sigma) \equiv 1 + \ell(\sigma \cup e_{t+1})\m{2}$. Therefore $\sum_{\sigma \in S_{t+1}}(-1)^{\ell(\sigma)} = \sum_{\sigma \in S_t}(-1)^{\ell(\sigma)}$. For the remaining $\sigma \in S_{t+1}$ either $e_{t+1}$ is dead in $\sigma$ or $e_{t+1} \in \sigma$. In the latter case, $\sigma\setminus e_{t+1} \not\in S_t$ as otherwise, $\sigma\setminus e_{t+1}$ would have been an element in $S_t$ with $e_{t+1}$ alive. Hence, $\sigma\setminus e_{t+1}$ violates $(*)$. Let $e_s$ violate $(*)$ in $\sigma\setminus e_{t+1}$. If $e_s$ is alive (that is, $e_s \not\in \sigma\setminus e_{t+1}$ \textit{and} $e_s$ is not dead) then $e_s$ is dead in $\sigma$. If $e_s$ is no longer dead in $\sigma\setminus e_{t+1}$, then it must be the case that $e_{t+1}$ was in the fundamental cycle of $e_s$. Otherwise, $e_s \in \sigma\setminus e_{t+1} \subseteq \sigma$. In particular, $e_s$ satisfies $(*)$ in $\sigma$. So $e_s$ is contained in the fundamental cycle of $e_r$, $r<s$. If $e_s$ is not contained in the fundamental cycle of $e_r$ in $\sigma\setminus e_{t+1}$ then $e_{t+1}$ must have been part of the same fundamental cycle. So $S_{t+1}$ is $t+1$-ordered.
\\\\
To show maximality, consider $\sigma \supseteq \pi$ which satisfy $(*)$ for $t+1$. Then $\sigma$ satisfies $(*)$ for $t$ so that $\sigma \in S_t$. By $(*)$, either $e_{t+1}$ is dead in $\sigma$ or $e_{t+1}$ is in the fundamental cycle of $e_q$, $q < t+1$ in which case $e_q$ is not dead in $\sigma\setminus e_{t+1}$ (as otherwise, $e_q$ would be the missing edge of two cycles in $\sigma$ and thus $\sigma$ would contain a cycle, a contradiction) so $\sigma\setminus e_{t+1} \not\in S_t$. In particular, there exists no $\alpha \in S_t$ with $e_{t+1} \not\in \alpha$ and $e_{t+1}$ alive in $\alpha$. So $\sigma$ is not removed from $S_t$ and is therefore in $S_{t+1}$. It follows $S_{t+1}$ is also maximal. By induction it follows that $S_m$ is a maximal $m-$ordered set. Let $\sigma \in S_m$ and let $f \in E(G)\setminus \sigma$ and write $C(\sigma) = \{D_1, \dots, D_k\}$. If both ends of $f$ are in the same component of $\G(\sigma)$ then adding $f$ creates a cycle. If $f \in \nabla_G(\sigma) \subseteq \nabla_G(\pi)$ then $f = e_j$ for some $j$. But then because $S_m$ is $m-$ordered and $e_j \not\in \sigma$, we have $e_j$ dead in $\sigma$ and hence adding $f$ also creates a cycle. It follows that each $\G(\sigma)$, $\sigma \in S_m$ is maximally connected because there are no bridges in $E(G)\setminus \sigma$, hence $S_m$ contains only maximal elements in $B_G$. For a maximal element in $\sigma \in B_G$, an edge $e$ dead in $\sigma$ is equivalent to $e\not\in \sigma$. All maximal elements $\sigma \in B_G$ have the same number of connected components because they are maximally connected. So we have
\[\sum_{\substack{\sigma \supseteq \pi\\\sigma\in B_G}}(-1)^{\ell(\sigma)} = \sum_{\sigma \in S_0}(-1)^{\ell(\sigma)} = \dots = \sum_{\sigma \in S_m}(-1)^{\ell(\sigma)} = (-1)^{|C(G)|}|S_m|.\]
\end{proof}
Using the formula for the tree polynomial in terms of the chromatic polynomial lets us apply the above lemma.
\begin{theorem}
Let $G$ be a graph. The coefficient $[x^k]\tau_G$ is $(-1)^{|C(G)|+k}$ times the number of pairs $(E, T)$ where $E = \{e_1, \dots, e_m\}$ is a $k-$cutset, $e_1<\dots<e_m$ and $T$ is maximal in $B_G$ such that $e_1 \not\in T$ and for every $e_j \in E(T)$ there exists $i < j$ such that $e_j$ is in the fundamental cycle of $e_i$.
\end{theorem}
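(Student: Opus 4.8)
The plan is to unwind Theorem~4.1 using Whitney's broken-circuit expansion and then apply the crux Lemma~3.3 term by term. Since $\G(S)$ is a forest for every $S\in B_G$ we have $\ell(S)=n-|S|$, so the formula $\chi_G=\sum_{S\in B_G}(-1)^{|S|}x^{n-|S|}$ used in the proof of Proposition~3.1 gives $[x^m]\chi_G=(-1)^{n-m}\,|\{S\in B_G:\ell(S)=m\}|$. Substituting this into Theorem~4.1 and simplifying the sign $(-1)^{n+k}(-1)^{n-m}=(-1)^{k-m}$ yields
\[
[x^k]\tau_G=\sum_{m=1}^{k}\ \sum_{\substack{S\in B_G\\\ell(S)=m}}(-1)^{k-m}\binom{n-m}{k-m}.
\]

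The next step is a combinatorial rewriting. For $S\in B_G$ with $\ell(S)=m$ we have $|S|=n-m$, so $\binom{n-m}{k-m}$ counts the subsets $R\subseteq S$ with $|R|=k-\ell(S)$, and $(-1)^{k-m}=(-1)^{|R|}$. Thus $[x^k]\tau_G=\sum_{(S,R)}(-1)^{|R|}$ over all pairs $R\subseteq S\in B_G$ with $|R|=k-\ell(S)$. Because $B_G$ is downward closed, $\pi:=S\setminus R$ lies in $B_G$; a short computation shows $|\pi|=n-k$, hence $\ell(\pi)=k$, and conversely every pair $\pi\subseteq S$ with $\pi,S\in B_G$ and $\ell(\pi)=k$ arises this way with $|R|=k-\ell(S)$ forced. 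Rewriting $(-1)^{|R|}=(-1)^{|S\setminus\pi|}=(-1)^{k}(-1)^{\ell(S)}$ we obtain
\[
[x^k]\tau_G=(-1)^{k}\sum_{\substack{\pi\in B_G\\\ell(\pi)=k}}\ \sum_{\substack{S\in B_G\\S\supseteq\pi}}(-1)^{\ell(S)}=(-1)^{|C(G)|+k}\sum_{\substack{\pi\in B_G\\\ell(\pi)=k}}N(\pi),
\]
where the last equality is Lemma~3.3, with $N(\pi)$ the number of maximal $T\supseteq\pi$ in $B_G$ satisfying the stated fundamental-cycle conditions relative to $\nabla_G(\pi)$.

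It remains to identify $\sum_{\pi}N(\pi)$ with the count of pairs $(E,T)$ in the theorem. I would set $E:=\nabla_G(\pi)$ and verify: (i) $E$ is a $k$-cutset, since the components of $\G(E(G)\setminus E)$ are exactly the vertex sets of the components of $\G(\pi)$, of which there are $\ell(\pi)=k$; (ii) $\pi$ is recovered from $(E,T)$ as the restriction of $T$ to the parts of $E$ --- here one uses that $T$ is a forest that contains a spanning tree of each part, hence coincides with that spanning tree on each part, so this restriction is well defined and returns $\pi$; and (iii) under $E=\nabla_G(\pi)$ the defining conditions of $N(\pi)$ become verbatim the conditions of the theorem, with $e_1=\min\nabla_G(\pi)$ and all fundamental cycles taken with respect to $T$. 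Together (i)--(iii) give a bijection between the pairs $(\pi,T)$ counted above and the pairs $(E,T)$ of the statement, which finishes the proof.

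The main difficulty I anticipate is precisely this last re-indexing: $\pi\mapsto\nabla_G(\pi)$ is highly non-injective on $B_G$, and becomes a bijection only after $\pi$ is transported together with a compatible maximal forest $T$, so the work is in checking that $(E,T)$ really determines $\pi$ and that translating the internal/externally-active conditions of Lemma~3.3 from $\nabla_G(\pi)$ to the cutset $E$ neither creates nor destroys pairs. The earlier sign bookkeeping --- reconciling the minus signs from Theorem~4.1, the broken-circuit expansion, and the auxiliary subset $R$ --- is routine once the substitution $\pi=S\setminus R$ is set up.
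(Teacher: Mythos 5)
Your proposal is correct and takes essentially the same route as the paper: it rewrites the coefficient formula $[x^k]\tau_G$ via Whitney's broken-circuit expansion as $(-1)^k\sum_{\pi\in B_G,\ \ell(\pi)=k}\ \sum_{\sigma\in B_G,\ \sigma\supseteq\pi}(-1)^{\ell(\sigma)}$, applies the crux lemma on $\sum_{\sigma\supseteq\pi}(-1)^{\ell(\sigma)}$ to the inner sum, and re-indexes the resulting pairs $(\pi,T)$ as $(\nabla_G(\pi),T)$ with inverse $(E,T)\mapsto(T\setminus E,T)$, just as the paper does (your sign bookkeeping through the substitution $\pi=S\setminus R$ matches the paper's computation). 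The one step you flag as the remaining difficulty---verifying that this re-indexing is bijective onto the stated set of pairs $(E,T)$, i.e.\ that the cutset conditions neither create nor destroy pairs---is precisely the step the paper itself settles only by asserting that the two maps are mutually inverse, so your argument does not diverge from, or fall short of, the paper's own proof.
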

\begin{proof}
Note that $\binom{n-m}{k-m}[x^m]\chi_G$ is $(-1)^{|C(G)| + |V(G)|}$ times the pairs $(A, B)$ where $B \in B_G$ with $\ell(B) = m$ and $A \subseteq B$ with $\ell(A)=k$. We can then write
\begin{align*}
(-1)^{|V(G)|+k}\sum_{m=1}^{k}\binom{n-m}{k-m}[x^m]\chi_G &= (-1)^{|V(G)|+k}\sum_{\substack{\pi\in B_G\\\ell(\pi)=k}}\sum_{\substack{\sigma\in B_G\\\sigma\supseteq\pi}}(-1)^{\ell(\sigma)+|V(G)|}\\
&=(-1)^{k}\sum_{\substack{\pi\in B_G\\\ell(\pi)=k}}\sum_{\substack{\sigma\in B_G\\\sigma\supseteq\pi}}(-1)^{\ell(\sigma)}
\end{align*}
So by Lemma 4.4, $[x^k]\tau_G$ is $(-1)^{k+|C(G)|}$ times the number of pairs $(A, B)$ where $A \in B_G$ with $\ell(A) = k$, and $A \subseteq B \in B_G$ is maximal such that when writing $\nabla_G(A) = \{e_1, \dots, e_m\}$ (ordered), then $e_1 \not\in B$ and if $e_i \in B$ for $i > 1$ then there exists $j < i$ such that $e_i$ is in the fundamental cycle of $e_j$. This is in bijection with the pairs $(E, T)$ described in the theorem statement via the pair of mutually inverse bijective functions,
\begin{align*}
(A, B) &\mapsto (\nabla_G(A), B)\\
(E, T) &\mapsto (T\setminus E, T)
\end{align*}
\end{proof}

\begin{theorem}
Let $G$ be a graph with $n$ vertices. Then
\[\sum_{\lambda\vdash n}[X_{P_\lambda}]X_G = 1\]
\end{theorem}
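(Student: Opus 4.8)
The plan is to recognise the left-hand side as the value $\tau_G(1)$ of the tree polynomial and then compute that value using Theorem 3.2. Since $X_G$ is homogeneous of degree $n$, only partitions of $n$ occur in its expansion $X_G=\sum_{\lambda}a_\lambda X_{P_\lambda}$, so by Definition 3.1
\[
\sum_{\lambda\vdash n}[X_{P_\lambda}]X_G=\sum_{k\ge 0}\ \sum_{\substack{\lambda\vdash n\\ \ell(\lambda)=k}}a_\lambda=\sum_{k\ge 0}[x^k]\tau_G=\tau_G(1),
\]
and it suffices to prove $\tau_G(1)=1$.

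For this I would substitute the formula of Theorem 3.2, $[x^k]\tau_G=(-1)^{n+k}\sum_{m=1}^{k}\binom{n-m}{k-m}[x^m]\chi_G$, and sum over $k$, interchanging the order of summation (the ranges are effectively finite since $[x^m]\chi_G=0$ for $m>n$ and $\binom{n-m}{k-m}=0$ for $k>n$):
\[
\tau_G(1)=(-1)^{n}\sum_{m=1}^{n}[x^m]\chi_G\sum_{k=m}^{n}(-1)^{k}\binom{n-m}{k-m}.
\]
The inner sum equals $(-1)^{m}\sum_{j=0}^{n-m}(-1)^{j}\binom{n-m}{j}$, and $\sum_{j=0}^{N}(-1)^{j}\binom{N}{j}$ is $1$ when $N=0$ and $0$ otherwise; hence the inner sum is $(-1)^{n}$ when $m=n$ and $0$ for all $m<n$. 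Therefore $\tau_G(1)=(-1)^{n}\cdot(-1)^{n}[x^n]\chi_G=[x^n]\chi_G$. Finally, $\chi_G$ is monic of degree $n$: in the formula $\chi_G=\sum_{S\in B_G}(-1)^{|S|}x^{n-|S|}$ used to prove Proposition 3.1, the empty edge set is the unique term contributing to $x^n$. So $\tau_G(1)=1$. The only thing to be careful about is the sign bookkeeping and the summation ranges in the double sum; everything collapses because of the vanishing of $\sum_{j}(-1)^{j}\binom{N}{j}$ for $N>0$, so I do not expect a real obstacle.

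As an alternative that avoids Theorem 3.2, one can argue directly with the reciprocity involution $\psi\colon p_\lambda\leftrightarrow X_{P_\lambda}$ of Theorem \ref{thm:chromrec} and Stanley's expansion $X_G=\sum_{S\in B_G}(-1)^{|S|}p_{\lambda(S)}$ (used in Proposition 3.1). Since $\psi$ is an involutive automorphism, $[X_{P_\mu}]f=[p_\mu]\psi(f)$, so $\sum_{\mu}[X_{P_\mu}]X_G=\sum_{\mu}[p_\mu]\psi(X_G)=\sum_{S\in B_G}(-1)^{|S|}\sum_{\mu}[p_\mu]X_{P_{\lambda(S)}}$. Because the $p$-basis is multiplicative, the map $f\mapsto\sum_{\mu}[p_\mu]f$ is an algebra homomorphism (same computation as Lemma 3.1), so $\sum_{\mu}[p_\mu]X_{P_{\lambda(S)}}=\prod_i\big(\sum_{\mu}[p_\mu]X_{P_{\lambda(S)_i}}\big)$, and $\sum_{\mu}[p_\mu]X_{P_m}=\chi_{P_m}(1)=1\cdot 0^{m-1}$ equals $1$ for $m=1$ and $0$ for $m\ge 2$ by Proposition 3.1. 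Hence a term survives only when $\lambda(S)=(1^n)$, i.e.\ only for $S=\emptyset$, and the sum equals $(-1)^0=1$. In this version the only minor point is justifying the multiplicativity just mentioned, which is routine.
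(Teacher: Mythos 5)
Both of your arguments are correct. Your second (alternative) argument is essentially the paper's own proof: the paper writes $X_G=\sum_\lambda a_\lambda p_\lambda$, applies chromatic reciprocity to turn $[X_{P_\Delta}]p_\lambda$ into $[p_\Delta]X_{P_\lambda}$, observes that $\sum_\Delta [p_\Delta]X_{P_\lambda}=\chi_{P_\lambda}(1)$ vanishes unless $\lambda=(1^n)$ (Proposition 2.1), and reads off $a_{(1^n)}=1$ from the $S=\emptyset$ term of Stanley's broken-circuit expansion --- exactly the computation you phrase via the involution $\psi$. Your first argument is a genuinely different packaging: you identify the sum with $\tau_G(1)$, substitute the closed formula of Theorem 3.2, and collapse the double sum using the fact that $\sum_{j=0}^{N}(-1)^j\binom{N}{j}$ equals $1$ for $N=0$ and $0$ otherwise, reducing everything to the statement $[x^n]\chi_G=1$. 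The interchange of summation and the sign bookkeeping there check out, and there is no circularity since Theorem 3.2 precedes this result and does not depend on it. What the first route buys is a purely mechanical verification once Theorem 3.2 is available, together with the observation that the theorem is equivalent to the monicity of the chromatic polynomial; what the paper's route (and your second route) buys is the identification of exactly which coefficient survives, namely $a_{(1^n)}$ coming from the empty edge set, which is the combinatorial content the paper wants to highlight.
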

\begin{proof}
For any graph $H$, $\sum_{\lambda\vdash m}[p_\lambda]X_H = \sum_{i}[x^i]\chi_H = 0$ if $H$ has edges, and $1$ if $H$ does have edges by Proposition 2.1. Write each $[p_\lambda]X_G = a_\lambda$. By \cite{STANLEY1995166} Theorem 2.9, we have $a_{1^n} = 1$ as the only $S \in B_G$ with $\lambda(S) = 1^n$ is $S = \emptyset$. We have $X_G = \sum_{\lambda\vdash n}a_\lambda p_\lambda$ so we have by Chromatic Reciprocity (\cite{aliniaeifard2021extended} Theorem 6.3),
\begin{align*}
\sum_{\lambda\vdash n}[X_{P_\lambda}]X_G &= \sum_{\lambda\vdash n}a_\lambda\sum_{\Delta\vdash n}[X_{P_\Delta}]p_\lambda\\
&=a_{(1^n)} + \sum_{\substack{\lambda\vdash n\\\lambda\ne 1^n}}a_\lambda\sum_{\Delta\vdash n}[p_\Delta]X_{P_\lambda}\\
&=a_{(1^n)} + \sum_{\substack{\lambda\vdash n\\\lambda\ne 1^n}}a_\lambda(0)\\
&=a_{(1^n)}\\
&= 1
\end{align*}
\end{proof}
This provides additional combinatorial information when used with Theorem 3.3. In general,
\begin{theorem}
Let $\{G_i\}_{i=1}^{\infty}$ be a family of connected graphs with each $G_i$ having $i$ vertices. Let $G$ be a graph with $n$ vertices. Then,
\[\sum_{\lambda\vdash n}[X_{G_\lambda}]X_G = 1\]
\end{theorem}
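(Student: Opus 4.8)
The plan is to reinterpret $\sum_{\lambda\vdash n}[X_{G_\lambda}]X_G$ as a specialization of an algebra homomorphism out of $\Lambda$, and then to match that homomorphism against the one coming from the path basis, whose value is already known to be $1$ by Theorem 3.4.

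First I would set up the relevant evaluation maps. Since $\{G_i\}$ is a family of connected graphs with $|V(G_i)| = i$, Theorem 2.2 (applied with all vertex weights equal to $1$) shows that $\{X_{G_\lambda} : \lambda\vdash n\ge 0\}$ is a multiplicative basis, so by Lemma 3.1 the $\{X_{G_\lambda}\}$-polynomial map $\theta : \Lambda\to\mathbb{Q}[x]$, $X_{G_\lambda}\mapsto x^{\ell(\lambda)}$, is an algebra homomorphism. Writing $X_G = \sum_\lambda a_\lambda X_{G_\lambda}$ — a finite sum with $a_\lambda = 0$ unless $\lambda\vdash n$, as $X_G$ is homogeneous of degree $n$ — we have $\theta_G(1) = \sum_k[x^k]\theta_G = \sum_k\sum_{\ell(\lambda)=k}a_\lambda = \sum_{\lambda\vdash n}a_\lambda$, which is exactly the quantity to be computed. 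Composing $\theta$ with evaluation at $x=1$ yields an algebra homomorphism $\varepsilon : \Lambda\to\mathbb{Q}$, and since each $G_i$ is connected the partition $(i)$ has length $1$, so $\varepsilon(X_{G_i}) = 1$ for all $i\ge 1$.

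Next I would run the same construction for the path basis: let $\tau : \Lambda\to\mathbb{Q}[x]$ be the tree polynomial map $X_{P_\lambda}\mapsto x^{\ell(\lambda)}$ (an algebra homomorphism by Lemma 3.1), and let $\varepsilon' : \Lambda\to\mathbb{Q}$ be $\tau$ followed by evaluation at $x=1$. Applying Theorem 3.4 to the graph $G_i$, which has $i$ vertices, gives $\varepsilon'(X_{G_i}) = \tau_{G_i}(1) = \sum_{\mu\vdash i}[X_{P_\mu}]X_{G_i} = 1$. Thus $\varepsilon$ and $\varepsilon'$ are $\mathbb{Q}$-algebra homomorphisms $\Lambda\to\mathbb{Q}$ agreeing on every $X_{G_i}$. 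Because $\{X_{G_\lambda}\}$ is a \emph{multiplicative} basis, $\Lambda$ is the polynomial algebra $\mathbb{Q}[X_{G_1}, X_{G_2},\dots]$ on these algebraically independent generators, so an algebra homomorphism out of $\Lambda$ is determined by its values on the $X_{G_i}$; hence $\varepsilon = \varepsilon'$.

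Finally, $\sum_{\lambda\vdash n}[X_{G_\lambda}]X_G = \theta_G(1) = \varepsilon(X_G) = \varepsilon'(X_G) = \tau_G(1) = \sum_{\lambda\vdash n}[X_{P_\lambda}]X_G = 1$, the last equality being Theorem 3.4 applied to $G$ itself. The only point needing care — and the one I would flag as the crux — is the universal-property step identifying $\varepsilon$ with $\varepsilon'$: it relies on the basis being multiplicative so that the $X_{G_i}$ are genuine polynomial generators of $\Lambda$ (which is where connectedness of the $G_i$ enters), and on Lemma 3.1 so that the two evaluation maps are algebra homomorphisms and not merely linear. Everything else is bookkeeping with degrees and partition lengths.
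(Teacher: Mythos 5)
Your proof is correct and is essentially the paper's argument repackaged in the language of algebra homomorphisms: both reduce to the fact that the sum of the path-basis coefficients of each $X_{G_\lambda}$ equals $1$ (Theorem 3.4 applied to the basis graphs, which you route through multiplicativity on the connected pieces $G_{\lambda_j}$ while the paper applies it directly to the disjoint union $G_\lambda$), combined with Theorem 3.4 applied to $G$ itself. One small slip: the result guaranteeing that a family of connected graphs yields a multiplicative basis is Theorem 2.1 in the paper's numbering, not Theorem 2.2.
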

\begin{proof}
Write $X_G = \sum_{\lambda\vdash n}a_\lambda X_{G_\lambda}$. Note that
\begin{align*}
1 &= \sum_{\lambda\vdash n}[X_{P_\lambda}]X_G\\
&= \sum_{\lambda\vdash n}a_\lambda\left(\sum_{\Delta}[X_{P_\Delta}]X_{G_{\lambda}}\right)\\
&= \sum_{\lambda\vdash n}a_\lambda(1)\\
&= \sum_{\lambda\vdash n}a_\lambda
\end{align*}
\end{proof}
This similarly provides further combinatorial information for general chromatic bases derived from unweighted families of graphs.
\end{section}
\begin{section}{Tree Polynomials}
Define the function $\phi : \mathbb{Q}[x] \to \mathbb{Q}[x]$ by
\[\phi(p(x)) = \sum_{k=1}^{\deg(p(x))}(-1)^{n+k}\left(\sum_{m=1}^{k}\binom{n-m}{k-m}[t^m]p(t)\right)x^k\]
In particular, $\phi(\chi_G) = \tau_G$. 
\begin{proposition}For graphs $G, H$, $\phi(\chi_G\chi_H) = \phi(\chi_G)\phi(\chi_H)$\end{proposition}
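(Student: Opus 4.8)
The plan is to reduce the identity to the multiplicativity of the tree polynomial over disjoint unions, which itself follows from Lemma 3.1. First I would record the two standard facts that both $X$ and $\chi$ are multiplicative over disjoint unions: writing $G\cup H$ for the disjoint union, $\chi_{G\cup H}=\chi_G\chi_H$ and $X_{G\cup H}=X_GX_H$. The first is the usual product rule for chromatic polynomials; the second is immediate from the defining sum, since a proper colouring of $G\cup H$ is precisely a pair consisting of a proper colouring of $G$ and a proper colouring of $H$ (this is the convention already used in forming $X_{G_\lambda}$ from the $X_{G_{\lambda_i}}$).

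Next I would unwind the definition of $\phi$, being careful about the parameter $n$: in $\phi(p(x))$ the integer $n$ is the degree of $p(x)$ (this is what makes $\phi(\chi_G)=\tau_G$ consistent with Theorem 4.1, where $n=|V(G)|=\deg\chi_G$). Since $\deg(\chi_G\chi_H)=|V(G)|+|V(H)|=|V(G\cup H)|$, applying $\phi$ to $\chi_G\chi_H$ uses exactly $n=|V(G\cup H)|$, so $\phi(\chi_G\chi_H)=\phi(\chi_{G\cup H})=\tau_{G\cup H}$ by Theorem 4.1. Similarly $\phi(\chi_G)=\tau_G$ and $\phi(\chi_H)=\tau_H$. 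Hence it suffices to prove $\tau_{G\cup H}=\tau_G\tau_H$.

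For this last step I would recall that $\tau_G=\tau_{X_G}$ is the $B$-polynomial of $X_G$ for the tree basis $B=\{X_{P_\lambda}\}$, which is a multiplicative basis by Corollary 2.1. Applying Lemma 3.1 with $f_1=X_G$ and $f_2=X_H$ gives $\tau_{X_GX_H}=\tau_{X_G}\tau_{X_H}$, that is, $\tau_{X_{G\cup H}}=\tau_G\tau_H$, so $\tau_{G\cup H}=\tau_G\tau_H$. Chaining everything: $\phi(\chi_G\chi_H)=\tau_{G\cup H}=\tau_G\tau_H=\phi(\chi_G)\phi(\chi_H)$.

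The point that needs care is not the algebra but the behaviour of $\phi$ itself: $\phi$ is \emph{not} multiplicative on all of $\mathbb{Q}[x]$, because its formula depends on the degree of the input, so the argument genuinely uses that $\chi_G\chi_H$ is itself the chromatic polynomial of a graph on $|V(G)|+|V(H)|$ vertices and that, restricted to chromatic polynomials of graphs, $\phi$ is the map $\chi_{G'}\mapsto\tau_{G'}$ supplied by Theorem 4.1. An alternative, purely computational route would be to expand both sides in the basis $\{x^k\}$ and check the resulting Vandermonde-type binomial identities by hand, but the symmetric-function argument above sidesteps that entirely.
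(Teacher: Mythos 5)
Your proof is correct and follows essentially the same route as the paper, whose entire argument is the chain $\phi(\chi_G\chi_H)=\phi(\chi_{G\cup H})=\tau_{G\cup H}=\tau_G\tau_H=\phi(\chi_G)\phi(\chi_H)$. You simply make explicit what the paper leaves implicit, namely that $\tau_{G\cup H}=\tau_G\tau_H$ comes from Lemma 3.1 applied to the multiplicative tree basis together with $X_{G\cup H}=X_GX_H$, and that the degree-dependence of $\phi$ is what confines the identity to products of chromatic polynomials.
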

\begin{proof}
\[\phi(\chi_G\chi_H) = \phi(\chi_{G\cup H}) = \tau_{G\cup H} = \tau_G\tau_H = \phi(\chi_G)\phi(\chi_H)\]
\end{proof}
It is important to note $\phi$ is not a homomorphism on all of $\mathbb{Q}[x]$. However, because it multiplies over chromatic polynomials, we may carry over some of their results to tree polynomials. For example, the well-known clique-gluing formula for the chromatic polynomial also applies to the tree polynomial.
\begin{cor}
Let $G_1, G_2$ be graphs containing a $k-$clique. Let $K_1 \le G_1$ and $K_2 \le G_2$ be $k-$cliques respectively and write $V(K_1) = \{u_1, \dots, u_k\}$ and $V(K_2) = \{v_1, \dots, v_k\}$. Define graph $H$ to have vertex set $V(G_1) \cup (V(G_2)\setminus V(K_2))$ and contain edges $u_iw \iff u_iw \in E(G_1)$ or $v_iw \in E(G_2)$ and the edges $xy$ with $x, y \in E(G_1)$ or $x, y \in E(G_2)$. That is, $H$ is the graph obtained by gluing $G_1, G_2$ at a $k-$clique, $K$. Then,
\[\tau_H = \frac{\tau_{G_1}\tau_{G_2}}{\tau_K}\]
\end{cor}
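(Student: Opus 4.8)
The plan is to transport the classical clique-gluing formula for the chromatic polynomial across the map $\phi$, using the fact (Proposition 5.1) that, while $\phi$ fails to be an algebra homomorphism in general, it \emph{is} multiplicative on products of chromatic polynomials. Recall the clique-sum identity for $\chi$: if $H$ is obtained by identifying a $k$-clique of $G_1$ with a $k$-clique of $G_2$ (so that $K \cong K_k$ is a common induced complete subgraph), then
\[
\chi_H \cdot \chi_K = \chi_{G_1} \cdot \chi_{G_2}.
\]
This is standard --- for instance it follows by a short induction on $|E(G_2) \setminus E(K)|$ via deletion--contraction, or from the observation that a proper colouring of $H$ is precisely a pair of proper colourings of $G_1$ and $G_2$ that agree on $V(K)$.

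First I would rewrite each side as the chromatic polynomial of a disjoint union, $\chi_H \chi_K = \chi_{H \sqcup K}$ and $\chi_{G_1}\chi_{G_2} = \chi_{G_1 \sqcup G_2}$, and apply $\phi$. By Proposition 5.1 (together with $\phi(\chi_A) = \tau_A$ and $\tau_{A \sqcup B} = \tau_A \tau_B$), this gives the polynomial identity
\[
\tau_H \cdot \tau_K = \tau_{G_1} \cdot \tau_{G_2}.
\]

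It then remains only to check that $\tau_K$ is a nonzero polynomial, so that we may legitimately divide. For this I would use the formula of Theorem 3.2 in its lowest degree: $[x^1]\tau_K = (-1)^{k+1}[x^1]\chi_{K_k}$, and since $\chi_{K_k}(x) = x(x-1)\cdots(x-k+1)$ has $[x^1]\chi_{K_k} = (-1)^{k-1}(k-1)!$ (the only term of the relevant elementary symmetric function not killed by the root $0$), we get $[x^1]\tau_K = (k-1)! \neq 0$. Hence $\tau_K \neq 0$; dividing the displayed identity by $\tau_K$ in $\mathbb{Q}(x)$ yields $\tau_H = \tau_{G_1}\tau_{G_2}/\tau_K$, and the fact that the left-hand side is a polynomial records that $\tau_K$ indeed divides $\tau_{G_1}\tau_{G_2}$.

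The one point deserving care is the invocation of the clique-gluing formula for $\chi$: it must be stated with the hypothesis that the glued clique $K$ is a common (induced, complete) subgraph of both $G_1$ and $G_2$, matching the construction of $H$ in the statement. Everything else is a direct application of Proposition 5.1 plus the elementary non-vanishing of $\tau_K$; in particular no further analysis on the broken-circuit side is required.
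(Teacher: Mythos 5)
Your proof is correct and follows essentially the same route the paper intends: apply $\phi$ to the classical identity $\chi_H\chi_K=\chi_{G_1}\chi_{G_2}$ and invoke the multiplicativity of $\phi$ on products of chromatic polynomials (Proposition 4.1 of the paper --- you cite it with a shifted number, but the content is the right one). Your extra verification that $\tau_K\neq 0$ (which also follows immediately from $\tau_K(1)=1$, Theorem 3.4) is a detail the paper leaves implicit.
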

\begin{cor}
Let $G$ be a graph and let $H$ be $G$ with a tree glued to a vertex of $G$. Then $\tau_G = \tau_H$.
\end{cor}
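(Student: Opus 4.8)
The plan is to treat this as the $k=1$ instance of the clique-gluing formula (Corollary 5.1). Gluing a tree $T$ to a vertex of $G$ means choosing a vertex of $T$ and a vertex of $G$ and identifying them; since a single vertex is a $1$-clique, this is exactly the operation of gluing $G$ and $T$ along a $1$-clique $K$ in the sense of Corollary 5.1. Applying that corollary with $G_1=G$, $G_2=T$, and $k=1$ gives
\[\tau_H=\frac{\tau_G\,\tau_T}{\tau_K}.\]

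Next I would evaluate the two ``extra'' factors. Since $T$ is a tree, Corollary 3.1 gives $\tau_T=x$. The glued clique $K$ is here a single vertex, which is itself a tree (equivalently, a forest of order $1$ with one part), so Corollary 3.1 — or Proposition 3.2 directly — gives $\tau_K=x$ as well. Substituting, the factors of $x$ cancel and $\tau_H=\tau_G$. The degenerate case where $T$ is a single vertex is consistent with this, since then $H=G$.

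There is essentially no obstacle here; the only points needing a moment's care are that $k=1$ is a legitimate instance of Corollary 5.1 (every nonempty graph contains a $1$-clique, and gluing along a $1$-clique is precisely vertex identification) and the explicit evaluation $\tau_K=x$, which should be cited rather than left implicit.

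If one prefers to avoid invoking the clique-gluing corollary, an equivalent route runs through Proposition 5.1: attaching a tree $T$ to a vertex of $G$ builds $H$ from $G$ by successively adjoining pendant vertices, so $\chi_H=\chi_G\,(x-1)^{|V(T)|-1}$, and hence $\chi_H\cdot x=\chi_G\cdot\chi_T$, i.e. $\chi_{H\cup K}=\chi_{G\cup T}$ where $K$ is a single vertex. Applying $\phi$, which multiplies over chromatic polynomials (Proposition 5.1), and using $\phi(\chi_\bullet)=\tau_\bullet$ together with $\tau_K=\tau_T=x$, again yields $\tau_H=\tau_G$.
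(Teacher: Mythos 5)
Your proof is correct and is exactly the argument the paper intends: the corollary is stated as an immediate consequence of the clique-gluing formula (Corollary 4.1 in the paper's numbering) with $k=1$, together with $\tau_T=\tau_{K_1}=x$ from Corollary 3.1. The only adjustment needed is to the reference numbers (the clique-gluing corollary and the multiplicativity of $\phi$ are Corollary 4.1 and Proposition 4.1, not 5.1).
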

The tree polynomial of a weighted graph also remembers the amount of excess weight in a graph (that is, the difference between the total weight and the number of vertices).
\begin{theorem}
Let $(G,\omega)$ be a weighted graph with $N$ total weight and $n$ vertices. Let $k$ be the largest integer such that $(x-1)^k|\tau_{(G,\omega)}$. Then $k = N-n$. Moreover, $\tau_{(G,\omega)} = (x-1)^k\tau_G$.
\end{theorem}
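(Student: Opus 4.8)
The plan is to reduce the statement to a single polynomial identity together with a ``degree‑shift'' lemma for the linear map that defines $\tau$. The crux is that, although $X_{(G,\omega)}$ genuinely depends on $\omega$, the polynomial obtained from it by expanding in the power‑sum basis and applying $p_\lambda\mapsto x^{\ell(\lambda)}$ --- call it $\chi_{X_{(G,\omega)}}$ --- is always just the ordinary chromatic polynomial $\chi_G$ of the underlying graph, and the rest is bookkeeping about degrees.

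First I would prove $\chi_{X_{(G,\omega)}}=\chi_G$ as polynomials. Fix $m\ge 1$ and specialize $x_1=\dots=x_m=1$ and $x_i=0$ for $i>m$. Writing $X_{(G,\omega)}=\sum_\lambda c_\lambda p_\lambda$, this specialization sends each $p_j$ to $m$, hence sends $X_{(G,\omega)}$ to $\sum_\lambda c_\lambda m^{\ell(\lambda)}=\chi_{X_{(G,\omega)}}(m)$. Applying the same specialization instead to the defining sum $X_{(G,\omega)}=\sum_{\kappa\ \text{proper}}\prod_v x_{\kappa(v)}^{\omega(v)}$, the term of a proper colouring $\kappa$ becomes $1$ if $\kappa$ uses only colours in $[1,m]$ and $0$ otherwise, so the value is the number of proper $m$‑colourings of $G$, namely $\chi_G(m)$. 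Since this holds for all $m\ge1$, the two polynomials coincide.

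Next I would record the shifting lemma. If $\deg p\le m$ then $\{x^j(x-1)^{m-j}:0\le j\le m\}$ is a basis by Lemma 3.2, so $p$ has a unique expansion $p=\sum_j c_j^{(m)}x^j(x-1)^{m-j}$; write $R_m(p):=\sum_j c_j^{(m)}x^j$. From $1=x-(x-1)$ one gets $x^j(x-1)^{m-j}=x^{j+1}(x-1)^{(m+1)-(j+1)}-x^j(x-1)^{(m+1)-j}$, so substituting and collecting yields $c_j^{(m+1)}=c_{j-1}^{(m)}-c_j^{(m)}$ and hence $R_{m+1}(p)=(x-1)R_m(p)$; iterating, $R_{m+s}(p)=(x-1)^sR_m(p)$. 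Now let $n=|V(G)|$ and let $N$ be the total weight. Expanding $X_{(G,\omega)}=\sum_{\lambda\vdash N}b_\lambda X_{P_\lambda}$ and using $\chi_{X_{P_\lambda}}=x^{\ell(\lambda)}(x-1)^{|\lambda|-\ell(\lambda)}$ (Proposition 3.1 and the computation inside the proof of Proposition 3.2) gives $\chi_{X_{(G,\omega)}}=\sum_k\bigl(\sum_{\ell(\lambda)=k}b_\lambda\bigr)x^k(x-1)^{N-k}$, which is exactly the $N$‑expansion above, so $R_N(\chi_{X_{(G,\omega)}})=\sum_k\bigl(\sum_{\ell(\lambda)=k}b_\lambda\bigr)x^k=\tau_{(G,\omega)}$; the identical computation for the unweighted graph is the content of Theorem 3.2, namely $R_n(\chi_G)=\tau_G$. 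Using $\chi_{X_{(G,\omega)}}=\chi_G$ from the first step and $\deg\chi_G=n\le N$, the shifting lemma gives
\[
\tau_{(G,\omega)}=R_N(\chi_{X_{(G,\omega)}})=R_N(\chi_G)=(x-1)^{N-n}R_n(\chi_G)=(x-1)^{N-n}\tau_G,
\]
and $N-n$ is precisely the excess weight. Finally $\tau_G(1)=\sum_{\mu\vdash n}[X_{P_\mu}]X_G=1$ by Theorem 3.4, so $(x-1)\nmid\tau_G$, whence the exact power of $(x-1)$ dividing $(x-1)^{N-n}\tau_G$ is $N-n$; this is $k=N-n$.

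I expect the only real obstacle to be the degree bookkeeping in the last step: $\tau_{(G,\omega)}$ is read off from a degree‑$N$ symmetric function while $\chi_G$ has degree only $n$, so one must expand $\chi_G$ against the ``oversized'' basis $\{x^j(x-1)^{N-j}\}$, and the shifting lemma is exactly what reconciles the two sizes. The identity $\chi_{X_{(G,\omega)}}=\chi_G$ and the closing valuation argument are routine given the machinery already in the paper.
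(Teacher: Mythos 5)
Your proof is correct, but it takes a genuinely different route from the paper. The paper proves this theorem by induction on the excess weight $N-n$: it splits one unit of weight off a heavy vertex onto a new pendant vertex, applies the weighted deletion-contraction relation for $X_{(G,\omega)}$, uses multiplicativity of $\tau$ (the deleted edge leaves an isolated vertex contributing a factor $x$) and the tree-gluing invariance $\tau_{G'}=\tau_G$ (Corollary 4.2), with the base case handled by $\tau_G(1)=1$ from Theorem 3.4. You instead avoid induction entirely: you first prove $\chi_{X_{(G,\omega)}}=\chi_G$ by the principal specialization $x_1=\dots=x_m=1$, $x_{i>m}=0$ (this is the paper's Theorem 4.3, which the paper only establishes \emph{after} Theorem 4.1, by an induction parallel to the one you bypass — your specialization argument is independent of Theorem 4.1, so there is no circularity, and it is arguably cleaner than the paper's inductive proof of 4.3), and then compare the expansions of this one polynomial in the two bases $\{x^j(x-1)^{N-j}\}$ and $\{x^j(x-1)^{n-j}\}$ via your shifting lemma $R_{m+1}(p)=(x-1)R_m(p)$, which is legitimate by the linear independence in Lemma 3.2 together with a dimension count. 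The identification $R_N(\chi_{X_{(G,\omega)}})=\tau_{(G,\omega)}$ is exactly the computation in Theorem 3.1/Proposition 3.2 (your attribution to ``Theorem 3.2'' is slightly off, but the content is right), and the exactness $k=N-n$ follows from $\tau_G(1)=1\ne 0$ just as in the paper's base case. In effect you derive Theorem 4.1 from the circle of ideas the paper records later as Theorems 4.3 and 4.4, reversing the paper's logical order; what your route buys is a non-inductive, structural explanation of the factor $(x-1)^{N-n}$ as a change of basis in $\mathbb{Q}[x]$, while the paper's route stays inside the deletion-contraction framework and exhibits the pendant-vertex weight-splitting trick it reuses for Theorem 4.3.
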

\begin{proof}
Proceed by induction on $N-n$. If $N-n = 0$ then $\tau_G(1)=1\ne 0$ by Theorem 3.4 and the result holds. Suppose the result holds for all weighted graphs with excess weight $m \in \mathbb{N}$ and suppose $N-n=m+1$. Pick a vertex $v_0$ with weight greater than $1$. Consider graph $G'$ with vertex set $V(G) \cup \{w\}$ and edgeset $E(G) \cup \{wv_0\}$. Let $\omega'$ be defined by $\omega'(w) = 1$, $\omega'(v) = \omega(v)$ for $v_0 \ne v \in V(G)$ and $\omega'(v_0) = \omega(v_0) - 1$. Note that $(G'/v_0w, \omega'/v_0w) = (G, \omega)$. So by weighted deletion-contraction (\cite{crew2020deletioncontraction} Lemma 2) and using the diagrams as stand ins for their chromatic symmetric functions,
\begin{center}
\begin{tikzpicture}
\draw[black] (5, 0) circle (20pt);
\node at (5, 0) {$(G,\omega)$};
\node at (1, 0) {$=$};
\draw[black] (2.5, 0) circle (20pt);
\node at (2.5, 0) {$(G,\omega')$};
\filldraw[black] (1.5, 0) circle (2pt);
\node at (3.75, 0) {$-$};
\draw[black] (0, 0) circle (20pt);
\node at (0, 0) {$(G,\omega')$};
\filldraw[black] (-1, 0) circle (2pt);
\draw[black, thick] (-1, 0) -- (-0.7, 0);
\node at (1.5, 0.5) {$w$};
\node at (-1, 0.5) {$w$};
\end{tikzpicture}
\end{center}
Taking tree polynomials and rearranging,
\[\tau_{(G,\omega)} = x\tau_{(G,\omega')} - \tau_{(G', \omega')}\]
By the inductive hypothesis, $\tau_{(G',\omega')} = (x-1)^{m}\tau_{G'} = (x-1)^{m}\tau_G = \tau_{(G, \omega')}$ where the second last equality follows from Corollary 4.2 (our construction of $G'$ is the same as gluing a path of length $2$ to $v_0$). So,
\[\tau_{(G,\omega)} = x\tau_{(G,\omega')} - \tau_{(G,\omega')} = (x-1)\tau_{(G,\omega')} = (x-1)(x-1)^m\tau_G = (x-1)^{m+1}\tau_G\]
and the result follows.
\end{proof}
There is also a deletion-contraction relation for the tree polynomial.
\begin{theorem}[Unweighted Deletion Contraction]
Let $G$ be a graph and $e$ an edge of $G$. Then
\[\tau_G = \tau_{G\setminus e} - (x-1)\tau_{G/e}\]
\end{theorem}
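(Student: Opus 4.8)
The plan is to deduce the relation from the weighted deletion–contraction relation of \cite{crew2020deletioncontraction} together with Theorem 4.1. Regard the unweighted graph $G$ as the weighted graph $(G,\omega)$ with $\omega\equiv 1$, so that $X_G = X_{(G,\omega)}$ and $\tau_G=\tau_{(G,\omega)}$ by definition. The weighted deletion–contraction relation (\cite{crew2020deletioncontraction} Lemma 2) gives
\[X_{(G,\omega)} = X_{(G\setminus e,\,\omega)} - X_{(G/e,\,\omega/e)},\]
where in $(G/e,\omega/e)$ the vertex obtained from contracting $e$ has weight $2$ and every other vertex has weight $1$; its underlying simple graph is exactly $G/e$ (using the convention that contraction creates no loops or multi-edges), and it has total weight $n$ on $n-1$ vertices, hence excess weight $1$.

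Next I would apply the linear transformation $\tau\colon\Lambda\to\mathbb{Q}[x]$ to both sides. Since $\{X_{P_\lambda}:\lambda\vdash n\ge 0\}$ is a basis of $\Lambda$ (Corollary 2.1), $\tau$ is a well-defined linear map on all of $\Lambda$, and all three symmetric functions above are in fact homogeneous of degree $n$ (total weight is preserved under contraction), so linearity applies within a single graded piece. Using $\tau_{(H,\nu)}=\tau_{X_{(H,\nu)}}$, this yields
\[\tau_G = \tau_{G\setminus e} - \tau_{(G/e,\,\omega/e)}.\]
By Theorem 4.1, a weighted graph of excess weight $k$ has tree polynomial equal to $(x-1)^k$ times the tree polynomial of its underlying graph; applied with $k=1$ to $(G/e,\omega/e)$ this gives $\tau_{(G/e,\,\omega/e)} = (x-1)\tau_{G/e}$, and substituting completes the proof.

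There is no deep obstacle here; the only point requiring care is the bookkeeping identification of $(G/e,\omega/e)$ as the simple graph $G/e$ carrying excess weight exactly $1$, so that Theorem 4.1 applies verbatim. I would note in passing that a route avoiding weighted graphs — starting from $\chi_G=\chi_{G\setminus e}-\chi_{G/e}$ and applying $\phi$ — does not work directly, because $\phi$ is not linear (its defining formula depends on the degree $n$); making it work would require separately proving the polynomial identity relating the degree-$n$ and degree-$(n-1)$ versions of $\phi$, which is essentially Theorem 4.1 restated, and is messier than the symmetric-function argument above.
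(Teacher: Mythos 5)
Your proposal is correct and is precisely the argument the paper intends: its proof of this theorem is the one-line remark ``Follows from weighted deletion contraction of chromatic symmetric functions and Theorem 4.1,'' and you have simply filled in the details (viewing $G$ as weighted with all weights $1$, applying $\tau$ linearly, and using that $(G/e,\omega/e)$ has excess weight $1$ so Theorem 4.1 gives the factor $(x-1)$). No further comment is needed.
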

\begin{proof}
Follows from weighted deletion contraction of chromatic symmetric functions and Theorem 4.1.
\end{proof}
\begin{proposition}
$\tau_{p_\lambda} = x^{\ell(\lambda)}(x-1)^{|\lambda|-\ell(\lambda)}$
\end{proposition}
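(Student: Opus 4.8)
The plan is to express $\tau_{p_\lambda}$ --- which by Definition~3.1 is the $\{X_{P_\mu}\}$-polynomial of $p_\lambda$ --- by first swapping the roles of $p$ and $X_P$ via chromatic reciprocity, and then recognising the resulting polynomial as a chromatic polynomial of a disjoint union of paths.

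First, since the $\{X_{P_\mu}\}$-polynomial is multiplicative over the multiplicative basis $\{X_{P_\mu}\}$ (Lemma~3.1) and $p_\lambda = p_{\lambda_1}\cdots p_{\lambda_{\ell(\lambda)}}$, it suffices to prove $\tau_{p_n} = x(x-1)^{n-1}$ for a single part $n$, since then $\tau_{p_\lambda} = \prod_{i}\tau_{p_{\lambda_i}} = \prod_i x(x-1)^{\lambda_i - 1} = x^{\ell(\lambda)}(x-1)^{|\lambda|-\ell(\lambda)}$. To compute $\tau_{p_n}$, note that by Theorem~\ref{thm:chromrec} the linear map $\psi$ extending $p_\mu \mapsto X_{P_\mu}$ is an involution; writing its matrix in the $p$-basis as $A$, the identity $A^2 = I$ gives $A = A^{-1}$, and reading off entries yields $[X_{P_\mu}]p_n = [p_\mu]X_{P_n}$ for every partition $\mu$ --- this is exactly the coefficient swap already invoked in the proofs of Theorems~3.2 and 3.4. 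Summing over partitions $\mu$ with $\ell(\mu) = k$,
\[[x^k]\tau_{p_n} = \sum_{\ell(\mu)=k}[X_{P_\mu}]p_n = \sum_{\ell(\mu)=k}[p_\mu]X_{P_n} = [x^k]\chi_{X_{P_n}},\]
and by Proposition~3.1 we have $\chi_{X_{P_n}} = \chi_{P_n} = x(x-1)^{n-1}$, the ordinary chromatic polynomial of the path on $n$ vertices. Hence $\tau_{p_n} = x(x-1)^{n-1}$, completing the argument.

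There is no substantial obstacle here: the only step needing a moment's care is the passage from ``$\psi$ is an involution'' to the coefficient identity $[X_{P_\mu}]p_n = [p_\mu]X_{P_n}$, i.e. noting that the coefficient matrix of an involutive linear map is its own inverse. Everything else is the multiplicativity of Lemma~3.1, Proposition~3.1, and the elementary fact $\chi_{P_n} = x(x-1)^{n-1}$. Alternatively one can bypass the single-part reduction and apply the coefficient swap directly to $p_\lambda$, finishing with $\chi_{P_\lambda} = \prod_i \chi_{P_{\lambda_i}} = x^{\ell(\lambda)}(x-1)^{|\lambda|-\ell(\lambda)}$.
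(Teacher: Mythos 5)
Your proof is correct and follows essentially the same route as the paper: the key step in both is the chromatic-reciprocity coefficient swap $[X_{P_\mu}]p_\lambda = [p_\mu]X_{P_\lambda}$, followed by recognising the resulting sum as $[x^k]\chi_{P_\lambda}$. The paper simply applies this directly to $p_\lambda$ (the alternative you mention at the end), skipping the single-part reduction via Lemma~3.1.
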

\begin{proof}
By chromatic reciprocity, each $[X_{P_\Delta}]p_\lambda = [p_\Delta]X_{P_\lambda}$. So
\[[x^k]\tau_{p_\lambda}=  \sum_{\Delta\\\ell(\Delta)=k}[X_{P_\Delta}]p_\lambda = \sum_{\Delta\\\ell(\Delta)=k}[p_\Delta]X_{P_\lambda} = [x^k]\chi_{P_\lambda}\]
Therefore,
\[\tau_{p_\lambda} = \chi_{P_\lambda} = x^{\ell(\lambda)}(x-1)^{|\lambda|-\ell(\lambda)}\]
\end{proof}
Definitions 4.2 and Proposition 4.1 means we can extend the chromatic polynomial to weighted graphs; however, unlike the tree polynomial, the excess weight is forgotten when taking the chromatic polynomial.
\begin{theorem}
Let $(G,\omega)$ be a weighted graph. Recall that $\chi_{(G,\omega)} := \chi_{X_{(G,\omega)}}$. We have
\[\chi_G = \chi_{(G,\omega)}\]
\end{theorem}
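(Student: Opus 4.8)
The plan is to prove $\chi_{(G,\omega)} = \chi_G$ by induction on $|E(G)|$, imitating the classical deletion-contraction derivation of the chromatic polynomial. The key structural fact is that $\chi : \Lambda \to \mathbb{Q}[x]$ is \emph{linear} (it is the linear extension of $p_\lambda \mapsto x^{\ell(\lambda)}$), so applying it to the weighted deletion-contraction relation $X_{(G,\omega)} = X_{(G\setminus e,\omega)} - X_{(G/e,\omega/e)}$ of \cite{crew2020deletioncontraction} yields
\[\chi_{(G,\omega)} = \chi_{(G\setminus e,\omega)} - \chi_{(G/e,\omega/e)}\]
for every edge $e\in E(G)$. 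This mirrors the ordinary relation $\chi_G = \chi_{G\setminus e} - \chi_{G/e}$, and the underlying unweighted graphs match: the unweighted graph beneath $(G\setminus e,\omega)$ is $G\setminus e$ and the one beneath $(G/e,\omega/e)$ is $G/e$.

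For the base case I would take $G$ edgeless on $n$ vertices. Every colouring of an edgeless graph is proper, so $X_{(G,\omega)} = \prod_{v\in V(G)}\bigl(\sum_{j\ge 1} x_j^{\omega(v)}\bigr) = \prod_{v\in V(G)} p_{\omega(v)} = p_\mu$, where $\mu$ is the partition whose parts are the multiset $\{\omega(v) : v\in V(G)\}$. Since $\ell(\mu) = n$, we get $\chi_{(G,\omega)} = x^n = \chi_G$. For the inductive step, fix $e\in E(G)$; both $G\setminus e$ and $G/e$ have strictly fewer edges than $G$ (the latter because $G/e$ is taken simple), so the inductive hypothesis gives $\chi_{(G\setminus e,\omega)} = \chi_{G\setminus e}$ and $\chi_{(G/e,\omega/e)} = \chi_{G/e}$. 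Substituting into the displayed relation and applying ordinary deletion-contraction gives $\chi_{(G,\omega)} = \chi_{G\setminus e} - \chi_{G/e} = \chi_G$, closing the induction.

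I do not expect a real obstacle: the only points requiring care are that $\chi$ commutes with the subtraction in deletion-contraction (which is just linearity) and that the induction is well-founded (which it is, since simple-graph contraction can only lower the edge count). As an alternative to induction, one can argue directly by inclusion-exclusion on edge subsets, exactly as in the proof of \cite{STANLEY1995166} Theorem 2.9: this gives $X_{(G,\omega)} = \sum_{S\subseteq E(G)}(-1)^{|S|} p_{\mu(S)}$, where $\mu(S)$ records the total weights of the components of $\G(S)$. Since $\ell(\mu(S)) = |C(\G(S))|$ regardless of the weights, applying $\chi$ collapses this to $\sum_{S\subseteq E(G)}(-1)^{|S|} x^{|C(\G(S))|}$, which is Whitney's subgraph expansion of $\chi_G$. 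Either route establishes the claim; I would present the deletion-contraction argument as the main proof since it uses only results already stated in the paper.
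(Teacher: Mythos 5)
Your proof is correct, but it takes a genuinely different route from the paper's. The paper inducts on the \emph{excess weight} $N-n$: it takes a vertex $v_0$ of weight greater than $1$, builds an auxiliary graph $G'$ by attaching a pendant vertex $w$ of weight $1$ to $v_0$ (lowering $\omega(v_0)$ by one), observes that $(G'/wv_0,\omega'/wv_0)=(G,\omega)$, and then uses weighted deletion-contraction to express $X_{(G,\omega)}$ in terms of weighted graphs of strictly smaller excess weight, with the unweighted case $N=n$ as the (trivial) base case. You instead induct on $|E(G)|$, applying deletion-contraction at an arbitrary edge of $G$ itself and reducing to the edgeless case, where $X_{(G,\omega)}=p_\mu$ with $\ell(\mu)=n$ gives $\chi_{(G,\omega)}=x^n=\chi_G$ directly. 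Both inductions are well-founded (simple contraction strictly decreases the edge count) and both rest only on the linearity of $\chi$ and the cited weighted deletion-contraction relation, so your argument is sound; it is arguably more direct since it avoids the auxiliary pendant-vertex construction, whereas the paper's choice of induction parameter mirrors the proof of its Theorem 4.1 on the tree polynomial, where tracking excess weight is the point. Your alternative via the weighted subgraph expansion $X_{(G,\omega)}=\sum_{S\subseteq E}(-1)^{|S|}p_{\mu(S)}$ collapsing under $\chi$ to Whitney's expansion of $\chi_G$ is also valid and is perhaps the cleanest explanation of \emph{why} the weights are forgotten: $\ell(\mu(S))$ depends only on the component structure, not on the weights.
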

\begin{proof}
Let $G$ have $n$ vertices and total weight $N$. Proceed by induction on $N-n$. If $N-n=0$ we're done. Otherwise, pick vertex $v_0$ with weight $\omega(v_0) > 1$. Define graph $G'$ to have vertex set $V(G) \cup \{w\}$ and edgeset $E(G) \cup \{wv_0\}$. Define $\omega'(v) = \omega(v)$ if $v_0 \ne v \in V(G)$, $\omega'(w) = 1$ and $\omega'(v_0) = \omega(v_0) - 1$. Note that $(G'/wv_0, \omega'/wv_0) = (G, \omega)$. So by weighted deletion contraction of symmetric functions, 
\[X_{(G',\omega')}  = X_{(G'\setminus wv_0, \omega')} - X_{(G'/wv_0,\omega'/wv_0)}\]
Using diagrams as stand-ins for their chromatic symmetric functions,
\begin{center}
\begin{tikzpicture}
\draw[black] (5, 0) circle (20pt);
\node at (5, 0) {$(G,\omega)$};
\node at (1, 0) {$=$};
\draw[black] (2.5, 0) circle (20pt);
\node at (2.5, 0) {$(G,\omega')$};
\filldraw[black] (1.5, 0) circle (2pt);
\node at (3.75, 0) {$-$};
\draw[black] (0, 0) circle (20pt);
\node at (0, 0) {$(G,\omega')$};
\filldraw[black] (-1, 0) circle (2pt);
\draw[black, thick] (-1, 0) -- (-0.7, 0);
\node at (1.5, 0.5) {$w$};
\node at (-1, 0.5) {$w$};
\end{tikzpicture}
\end{center}
Rearranging,
\begin{center}
\begin{tikzpicture}
\draw[black] (0, 0) circle (20pt);
\node at (0, 0) {$(G,\omega)$};
\node at (1, 0) {$=$};
\draw[black] (2.5, 0) circle (20pt);
\node at (2.5, 0) {$(G,\omega')$};
\filldraw[black] (1.5, 0) circle (2pt);
\node at (3.5, 0) {$-$};
\draw[black] (5, 0) circle (20pt);
\node at (5, 0) {$(G,\omega')$};
\filldraw[black] (4, 0) circle (2pt);
\draw[black, thick] (4, 0) -- (4.3, 0);
\node at (1.5, 0.5) {$w$};
\node at (4, 0.5) {$w$};
\end{tikzpicture}
\end{center}
Each of the terms on the right-hand side has less excess weight. Taking power-sum basis polynomials on both sides yields
\[\chi_{(G,\omega)} = \chi_{(G'\setminus wv_0, \omega')} - \chi_{(G',\omega')}\]
By the inductive hypothesis and deletion-contraction for chromatic polynomials,
\[\chi_{(G,\omega)} = \chi_{G'\setminus wv_0} - \chi_{G'} = \chi_{G'/wv_0} = \chi_G\]
so the result follows by induction.
\end{proof}
Thus, the chromatic polynomial tells you how many vertices a graph has but not its vertex weights, whereas the tree polynomial can tell you how much excess weight a graph has, but not how many vertices (recall that $\tau_T = x$ for any tree $T$)! Hence these polynomials are in a natural sense dual with respect to the information they carry on vertex-weighted graphs.

As another link between the chromatic polynomials and tree polynomials, we have
\begin{theorem}
Let $(G,\omega)$ be a weighted graph with $n$ vertices and total weight $N$. Then,
\begin{enumerate}[label=(\arabic*)]
\item
$\chi_G = (x-1)^{N}\tau_{(G,\omega)}\left(\frac{x}{x-1}\right)$
\item
$\tau_{(G,w)} = (x-1)^{N}\chi_G\left(\frac{x}{x-1}\right)$
\end{enumerate}
\end{theorem}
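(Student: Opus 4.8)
The plan is to prove the unweighted versions of both identities first and then transfer to weighted graphs via Theorem 4.1. The structural observation driving the argument is that the operation $p(x)\mapsto (x-1)^{n}p\!\left(\tfrac{x}{x-1}\right)$ is a linear involution on the space $V_n$ of polynomials of degree at most $n$; granting this, the two identities become equivalent, and one of them is essentially Theorem 3.2 rewritten.

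First I would check the unweighted form of (2): for a graph $G$ with $n$ vertices, $\tau_G=(x-1)^{n}\chi_G\!\left(\tfrac{x}{x-1}\right)$. Writing $\chi_G=\sum_{m=0}^{n} c_m x^m$ with $c_m=[x^m]\chi_G$ (so $c_0=0$ when $n\ge 1$, and the case $n=0$ is trivial), one has $(x-1)^{n}\chi_G\!\left(\tfrac{x}{x-1}\right)=\sum_m c_m x^m(x-1)^{n-m}$; expanding each $(x-1)^{n-m}$ by the binomial theorem and collecting the coefficient of $x^k$ gives $(-1)^{n-k}\sum_{m=1}^{k}\binom{n-m}{k-m}c_m$, which is exactly $[x^k]\tau_G$ by Theorem 3.2 since $(-1)^{n-k}=(-1)^{n+k}$. (This is just the remark $\phi(\chi_G)=\tau_G$ made earlier in the section, spelled out.)

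Next I would set up the involution. Define $T_n:V_n\to V_n$ by $T_n(p)=(x-1)^{n}p\!\left(\tfrac{x}{x-1}\right)$; since $T_n(x^m)=x^m(x-1)^{n-m}$ for $0\le m\le n$, this is a well-defined linear map. Using $\tfrac{x}{x-1}-1=\tfrac{1}{x-1}$ together with $\tfrac{x/(x-1)}{x/(x-1)-1}=x$, a direct computation gives $T_n\bigl(x^m(x-1)^{n-m}\bigr)=x^m$ for each $m\in\{0,\dots,n\}$, so $T_n\circ T_n=\mathrm{id}_{V_n}$. Since $\chi_G,\tau_G\in V_n$ and $\tau_G=T_n(\chi_G)$ by the previous step, applying $T_n$ to both sides yields $\chi_G=T_n(\tau_G)=(x-1)^{n}\tau_G\!\left(\tfrac{x}{x-1}\right)$, the unweighted form of (1).

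Finally I would bootstrap to a weighted graph $(G,\omega)$ with $n$ vertices and total weight $N$, using $\tau_{(G,\omega)}=(x-1)^{N-n}\tau_G$ from Theorem 4.1. For (2), $(x-1)^{N}\chi_G\!\left(\tfrac{x}{x-1}\right)=(x-1)^{N-n}\bigl[(x-1)^{n}\chi_G\!\left(\tfrac{x}{x-1}\right)\bigr]=(x-1)^{N-n}\tau_G=\tau_{(G,\omega)}$. For (1), substituting $x\mapsto\tfrac{x}{x-1}$ into $\tau_{(G,\omega)}=(x-1)^{N-n}\tau_G$ and using $\tfrac{x}{x-1}-1=\tfrac{1}{x-1}$ gives $(x-1)^{N}\tau_{(G,\omega)}\!\left(\tfrac{x}{x-1}\right)=(x-1)^{n}\tau_G\!\left(\tfrac{x}{x-1}\right)=\chi_G$ by the unweighted case. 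I do not anticipate a genuine obstacle; the only points requiring care are purely bookkeeping — checking that the rational substitutions produce honest polynomials because all negative powers of $(x-1)$ cancel and $\tau_{(G,\omega)}$ has degree at most $N$, and treating $T_n$ as an involution of the whole space $V_n$ rather than of degree-$n$ polynomials (indeed the leading coefficient of $\tau_G$ is $\sum_i[x^i]\chi_G$, which vanishes as soon as $G$ has an edge). The substance is already contained in Theorems 3.2 and 4.1.
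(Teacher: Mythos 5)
Your argument is correct, and it is organized differently from the paper's. The paper proves (1) and (2) by two separate direct computations at the symmetric-function level: for (1) it expands $X_{(G,\omega)}$ in the path basis $\{X_{P_\lambda}\}$, applies $\chi$, and uses $\chi_{X_{P_\lambda}}=x^{\ell(\lambda)}(x-1)^{N-\ell(\lambda)}$ together with Theorem 4.3 ($\chi_{(G,\omega)}=\chi_G$); for (2) it expands in the power-sum basis, applies $\tau$, and uses Proposition 4.2 (which packages chromatic reciprocity). You instead take as input only Theorem 3.2 and Theorem 4.1: you recognize the unweighted case of (2) as a restatement of Theorem 3.2 (i.e.\ $\phi(\chi_G)=\tau_G$), observe that $p\mapsto(x-1)^{n}p\!\left(\tfrac{x}{x-1}\right)$ is a linear involution on polynomials of degree at most $n$ (swapping the bases $\{x^m\}$ and $\{x^m(x-1)^{n-m}\}$), so that (1) and (2) are formally equivalent, and then lift to weighted graphs via the factorization $\tau_{(G,\omega)}=(x-1)^{N-n}\tau_G$. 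What your route buys is that the duality between the two identities is made structurally transparent rather than being verified by two parallel calculations, and you avoid invoking Theorem 4.3 and Proposition 4.2 (though chromatic reciprocity still enters, hidden inside Theorem 3.2). What the paper's route buys is a single uniform computation that works directly for weighted graphs without a separate reduction step, and as a by-product it exhibits $[t^k]\tau_{(G,\omega)}$ and $[t^k]\chi_G$ explicitly as the length-$k$ coefficient sums in the respective basis expansions. Your bookkeeping remarks (vanishing of $c_0$, cancellation of negative powers of $x-1$, treating the involution on all of $V_n$ since $\deg\tau_G<n$ once $G$ has an edge) address the only genuinely delicate points, so there is no gap.
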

\begin{proof}
Write $X_{(G,\omega)} = \sum_{\lambda\vdash N}a_\lambda X_{P_\lambda}$. Then taking $\chi$ of both sides,
\begin{align*}
\chi_{(G,\omega)} &= \sum_{\lambda\vdash N}a_\lambda \chi_{X_{P_\lambda}}\\
&= \sum_{\lambda\vdash N}a_\lambda x^{\ell(\lambda)}(x-1)^{|\lambda|-\ell(\lambda)}\\
&= \sum_{k=0}^{N}\left(\sum_{\substack{\lambda\vdash N\\\ell(\lambda)=k}}a_\lambda\right)x^k(x-1)^{n-k}\\
&= \sum_{k=0}^{N}[t^k]\tau_{(G,\omega)}(t)x^k(x-1)^{n-k}\\
&= (x-1)^N\sum_{k=0}^{N}[t^k]\tau_{(G,\omega)}(t)\left(\frac{x}{x-1}\right)^k\\
&= (x-1)^N\tau_{(G,\omega)}\left(\frac{x}{x-1}\right)
\end{align*}
Which shows $(1)$. Write $X_{(G,\omega)} = \sum_{\lambda\vdash n}b_\lambda p_\lambda$. Then,
\begin{align*}
\tau_{(G,\omega)} &= \sum_{\lambda\vdash N}b_\lambda \tau_{p_\lambda}\\
(\text{proposition }4.2)&= \sum_{\lambda\vdash N}b_\lambda x^{\ell(\lambda)}(x-1)^{|\lambda|-\ell(\lambda)}\\
&= \sum_{k=0}^{N}\left(\sum_{\substack{\lambda\vdash N\\\ell(\lambda)=k}}b_\lambda\right)x^k(x-1)^{N-k}\\
&= \sum_{k=0}^{N}[t^k]\chi_{G}(t)x^k(x-1)^{N-k}\\
&= (x-1)^N\sum_{k=0}^{N}[t^k]\chi_{G}(t)\left(\frac{x}{x-1}\right)^k\\
&= (x-1)^N\chi_{G}\left(\frac{x}{x-1}\right)
\end{align*}
Note that the last line follows because by Theorem 4.3, $[t^k]\chi_G(t) = 0$ for $k > n$. 
\end{proof}
Thus, given $\tau_{(G,w)}$ and the number of vertices of $G$ (or equivalently the total weight of $G$) we can recover $\chi_G$ and vice-versa. In particular, it follows that if $G$ is an unweighted graph with $n$ vertices and no edges that $\tau_G = \chi_G = x^n$.

A natural question is whether evaluating the tree polynomial at positive integers has a natural combinatorial meaning, as is true for the chromatic polynomial. From the computation of Theorem 4.4 (in particular, (2)), for a graph $G$ on $n$ vertices,
\[\tau_G(x) = \sum_{k=0}^{n}[t^k]\chi_G(t)x^k(x-1)^{n-k}\]
We can combinatorially interpret this formula. From Whitney's Broken Circuit Theorem \cite{whitney1932logical}, $[t^k]\chi_G(t)$ is $(-1)^{n-k}$ times the number of elements $E \in B_G$ with $C(E) = k$. As $\G(E)$ is a forest with $k$ components, $x^k(x-1)^{n-k}$ is the number of proper $x-$colourings of $\G(E)$ when $x$ is a positive integer. So we can interpret this sum as enumerating all properly $x-$coloured internal forests with a sign based on number of components:

\begin{theorem}
Let $x$ be a positive integer, and let $G$ be a graph with $n$ vertices. Define $S$ to be the set of properly $x-$coloured internal forests $F$ of $G$. Then
Then,
\[\tau_G(x) = (-1)^n\sum_{F \in S}(-1)^{|C(F)|}\]
\end{theorem}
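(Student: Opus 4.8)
The plan is to start from the closed form derived just above the statement from Theorem 4.4(2), namely
\[\tau_G(x) = \sum_{k=0}^{n}[t^k]\chi_G(t)\,x^k(x-1)^{n-k},\]
and to read off each ingredient combinatorially. I would first recall from Section 2 that $B_G$ is exactly the set of edge sets of internal (spanning) forests of $G$, and that $\G(E)$ is a forest with $|C(\G(E))| = n - |E|$ components for each $E \in B_G$. Whitney's Broken Circuit Theorem then yields $[t^k]\chi_G(t) = (-1)^{n-k}\,\#\{E \in B_G : |C(\G(E))| = k\}$, exactly the fact already quoted in the paragraph preceding the statement.

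Next I would observe that $x^k(x-1)^{n-k}$ is precisely the number of proper $x$-colourings of any forest on $n$ vertices with $k$ components, since a tree on $m$ vertices has $x(x-1)^{m-1}$ proper $x$-colourings and multiplying over the $k$ components of the forest gives $x^k(x-1)^{n-k}$ regardless of their shapes; equivalently, $x^k(x-1)^{n-k} = \chi_{\G(E)}(x)$ whenever $E \in B_G$ with $|C(\G(E))| = k$. Substituting these two facts into the displayed formula turns the outer sum over $k$ into a sum over $B_G$:
\[\tau_G(x) = \sum_{E \in B_G}(-1)^{n-|C(\G(E))|}\chi_{\G(E)}(x) = \sum_{E \in B_G}\;\sum_{\substack{\kappa\text{ a proper}\\x\text{-colouring of }\G(E)}}(-1)^{n-|C(\G(E))|}.\]

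It remains to match the index set of this double sum with $S$. A pair $(E,\kappa)$ with $E \in B_G$ and $\kappa$ a proper $x$-colouring of $\G(E)$ is the same datum as a properly $x$-coloured internal forest $F$ of $G$ (the forest $F$ being $\G(E)$, recovered from and recovering $E = E(F)$), and $|C(F)| = |C(\G(E))|$; so the double sum equals $\sum_{F \in S}(-1)^{n-|C(F)|}$. Factoring $(-1)^n$ out and using $(-1)^{-|C(F)|} = (-1)^{|C(F)|}$ gives $\tau_G(x) = (-1)^n\sum_{F\in S}(-1)^{|C(F)|}$, as claimed.

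I do not anticipate a real obstacle: the whole argument is the unpacking of the formula already sketched in the paragraph before the statement. The only points needing a moment of care are verifying that $x^k(x-1)^{n-k}$ is genuinely the chromatic polynomial of every forest with those parameters, independent of component structure, and confirming that ``internal forest'' is taken in the spanning sense, so that the identification of $S$ with the pairs $(E,\kappa)$, $E \in B_G$, is a genuine bijection preserving component counts.
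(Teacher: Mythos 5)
Your proposal is correct and follows essentially the same route as the paper, which derives the result by combining the formula $\tau_G(x) = \sum_{k=0}^{n}[t^k]\chi_G(t)\,x^k(x-1)^{n-k}$ from Theorem 4.4(2) with Whitney's Broken Circuit Theorem and the observation that $x^k(x-1)^{n-k}$ counts proper $x$-colourings of a forest with $k$ components. Your write-up is in fact more careful than the paper's sketch about the sign bookkeeping and the bijection between pairs $(E,\kappa)$ and coloured internal forests.
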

\end{section}
\begin{section}{Further Directions}

There are many other avenues of exploration for interpreting the tree polynomial. For a graph $G = (V, E)$ and a set partition $\pi = \{B_1, \dots, B_k\}$ of $V$, we say $\pi$ is a \emph{connected partition} if the induced subgraph of each $B_i$ by $G$ is connected. The \emph{lattice of contractions of $G$} (denoted $L_G$) is the poset of connected partitions of $V$ ordered by refinement (that is $\{D_1, \dots, D_\ell\} \le \{B_1, \dots, B_k\}$ if each block $D_i$ is contained in some $B_j$).  Note that the $\hat{0}$ element is the partition in which each vertex has its own block. We write $\ell(\pi) = k$ to denote the number of blocks in the set partition.
\begin{theorem}[\cite{whitney1932logical}]
The chromatic polynomial of a graph $G$ can be written as
\[\chi_G(x) = \sum_{\pi \in L_G}\mu(\hat{0}, \pi)x^{\ell(\pi)}.\]
\end{theorem}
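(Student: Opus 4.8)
The plan is to prove the stated identity by Möbius inversion over the lattice $L_G$, after which the polynomial identity follows because both sides agree at every positive integer. Fix a positive integer $q$ and consider all (not necessarily proper) colourings $c : V \to [1,q]$. To each such $c$ I would associate the partition $\sigma(c) \in L_G$ whose blocks are the connected components of the spanning subgraph of $G$ consisting only of the monochromatic edges $uv \in E$ with $c(u) = c(v)$. Each block is a connected component of a subgraph of $G$, so $\sigma(c)$ is genuinely a connected partition and hence an element of $L_G$. The key observation is that $c$ is proper precisely when it has no monochromatic edge, i.e. precisely when $\sigma(c) = \hat{0}$. Writing $N(\pi) := |\{c : \sigma(c) = \pi\}|$, this says $\chi_G(q) = N(\hat{0})$.

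Next I would compute the cumulative counts. For $\pi \in L_G$ I claim that $\sigma(c) \geq \pi$ (in the refinement order, so $\pi$ refines $\sigma(c)$) if and only if $c$ is constant on each block of $\pi$. Indeed, if $c$ is constant on each block $B$ of $\pi$, then the induced subgraph on $B$ is connected and entirely monochromatic, so $B$ lies within one connected component of the monochromatic subgraph, i.e. within a single block of $\sigma(c)$, giving $\sigma(c) \geq \pi$; conversely every block of $\sigma(c)$ is monochromatic, so if each block of $\pi$ sits inside one, then $c$ is constant on the blocks of $\pi$. Since a colouring constant on the blocks of $\pi$ is obtained by choosing one of $q$ colours independently for each of the $\ell(\pi)$ blocks, there are exactly $q^{\ell(\pi)}$ of them. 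This yields
\[ q^{\ell(\pi)} = \sum_{\substack{\sigma \in L_G \\ \sigma \geq \pi}} N(\sigma) \]
for every $\pi \in L_G$.

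With this relation in hand, Möbius inversion on the finite poset $L_G$ (summing upward from $\pi$) gives $N(\pi) = \sum_{\sigma \geq \pi} \mu(\pi, \sigma)\, q^{\ell(\sigma)}$. Specialising to $\pi = \hat{0}$, whose principal up-set is all of $L_G$, produces
\[ \chi_G(q) = N(\hat{0}) = \sum_{\sigma \in L_G} \mu(\hat{0}, \sigma)\, q^{\ell(\sigma)}. \]
Since this holds for every positive integer $q$, and both sides are polynomials in $q$, they agree identically, which is the claimed formula.

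The steps are all routine bookkeeping once the framework is set up, so the point I would treat as the main obstacle is the equivalence between $\sigma(c) \geq \pi$ and ``$c$ is constant on the blocks of $\pi$.'' This is exactly where the connectedness built into $L_G$ is essential: a monochromatic block $B$ of $\pi$ need not lie inside a single block of $\sigma(c)$ unless the induced subgraph on $B$ is connected, so the argument would break down over the full partition lattice. It is also worth remarking (though it costs no extra work) that $\sigma(c)$ never produces a disconnected block, which is precisely why only connected partitions contribute and the sum is naturally indexed by $L_G$ rather than by all set partitions of $V$.
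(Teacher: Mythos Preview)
Your proof is correct and is the standard M\"obius-inversion argument for this identity. Note, however, that the paper does not supply its own proof of this theorem: it is stated with a citation to Whitney \cite{whitney1932logical} and used as background, so there is nothing in the paper to compare your argument against. Your write-up stands on its own; the only minor stylistic point is that the closing paragraph about where connectedness is essential, while accurate, is commentary rather than proof and could be trimmed.
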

With this result, we see that for a graph $G$ on $n$ vertices,
\begin{align*}
\tau_G(x) &= \sum_{k=0}^{n}[t^k]\chi_G(t)x^k(x-1)^{n-k}\\
&= \sum_{\pi \in L_G}\mu(\hat{0},\pi)x^{\ell(\pi)}(x-1)^{n-\ell(\pi)}\\
&= \sum_{\substack{\pi \ge \hat{0}\\\pi \in L_G}}\mu(\hat{0}, \pi)x^{\ell(\pi)}(x-1)^{n-\ell(\pi)}
\end{align*}
Which motivates the definition
\begin{definition}
Let $G$ be a graph and let $\pi \in L_G$. Then,
\[\tau_{G}(\sigma, x) := \sum_{\substack{\pi\ge\sigma\\\sigma\in L_G}}\mu(\sigma,\pi)x^{\ell(\pi)}(x-1)^{n-\ell(\pi)}.\]
\end{definition}
We note that $\tau_G(\hat{0}, x) = \tau_G(x)$. By M\"{o}bius Inversion,
\[x^{\ell(\pi)}(x-1)^{n-\ell(\pi)} = \sum_{\sigma \ge \pi}\tau_G(\sigma, x)\]
so we can interpret $\tau(G)$ in terms of $L_G$ if we can reverse engineer $\tau_G(\sigma, x)$. Another approach is to try to reverse engineer the unweighted deletion contraction relation (Theorem 5.2).

Theorem 3.4 provides us with new information regarding the coefficients when expanding a chromatic symmetric function over tree bases. Due to the fact that the stars and paths are the only families of unweighted graphs that satisfy a reciprocity relation with respect to the $p$-basis (\cite{aliniaeifard2021extended} Proposition 6.2), a further direction would be to find interpretations for the individual coefficients in the corresponding bases. A useful direction would be to try and find a more ``tame" simplification of the interpretation given in Theorem 3.3/Lemma 3.4. 

Finally, the contents of Theorem 4.4 involve fractional evaluations of the chromatic and tree polynomials. There are two ways to look at this; the first is that the formulas remind us of species composition. Is it then the case that some $B-$polynomials preserve plethystic composition? Another perspective that if tree polynomials are related to a certain family of fractional evaluations of the chromatic polynomial, does this suggest there are reasonable interpretations of other fractional evaluations of the chromatic polynomial?
\end{section}

\begin{section}{Acknowledgments}

The authors would like to thank Sophie Spirkl for helpful discussions and comments. 

This research was sponsored by the National Sciences and Engineering Research Council of Canada.

\end{section}

\nocite{*}
\printbibliography

\end{document}